\documentclass[]{amsart}
\usepackage{amsmath}
\usepackage[english]{babel}
\usepackage[utf8]{inputenc}
\usepackage{amsfonts}
\usepackage{amsthm}
\usepackage{graphicx}
\usepackage{color}

\newtheorem{thm}{Theorem}[section]

\newtheorem{cor}[thm]{Corollary}
\newtheorem{lem}[thm]{Lemma}
\theoremstyle{remark}
\newtheorem{rmk}[thm]{Remark}
\theoremstyle{definition}

\DeclareMathOperator{\E}{\mathbb{E}}
\DeclareMathOperator{\N}{\mathbb{N}}
\DeclareMathOperator{\R}{\mathbb{R}}
\DeclareMathOperator{\C}{\mathbb{C}}

\DeclareMathOperator{\cP}{\mathcal{P}}
\DeclareMathOperator{\cA}{\mathcal{A}}
\DeclareMathOperator{\cN}{\mathcal{N}}
\DeclareMathOperator{\cL}{\mathcal{L}}
\DeclareMathOperator{\wcP}{\widetilde{\mathcal{P}}}
\DeclareMathOperator{\wcL}{\widetilde{\mathcal{L}}}
\DeclareMathOperator{\cI}{\mathcal{I}}

\DeclareMathOperator{\cS}{\mathcal{S}}

\DeclareMathOperator{\bP}{\mathbb{P}}

\newcommand{\pd}[2]{\frac{\partial #1}{\partial #2}}

\newcommand{\der}[2]{\frac{d #1}{d #2}}

\title{Fractional Erlang Queues}

\author{Giacomo Ascione$^\ast$}
\address{$^\ast$ Dipartimento di Matematica e Applicazioni ``Renato Caccioppoli'', Università degli Studi di Napoli Federico II, 80126 Napoli, Italy}
\author{Nikolai Leonenko$^\dagger$}
\thanks{N. Leonenko was supported in particular by Australian Research Council's Discovery Projects funding scheme (project DP160101366), and  by project MTM2015-71839-P of MINECO, Spain (co-funded with FEDER funds).}
\address{$^\dagger$ School of Mathematics, Cardiff University, Cardiff CF24 4AG, UK}
\author{Enrica Pirozzi$^\ast$}
\email{giacomo.ascione@unina.it \\
 leonenkon@cardiff.ac.uk \\
enrica.pirozzi@unina.it}

\begin{document}
\maketitle

\begin{abstract}
We introduce a fractional generalization of the Erlang Queues $M/E_k/1$. Such process is obtained through a time-change via inverse stable subordinator of the classical queue process. We first exploit the (fractional) Kolmogorov forward equation for such process, then we use such equation to obtain an interpretation of this process in the queuing theory context. Then we also exploit the transient state probabilities and some features of this fractional queue model, such as the mean queue length, the distribution of the busy periods and some conditional distributions of the waiting times. Finally, we provide some algorithms to simulate their sample paths.
\end{abstract}
\keywords{Stable subordinator, Caputo Fractional derivative, Mittag-Leffler function, Time-changed process, Continuous time Markov chain}

\section{Introduction}
The Erlang Queue is one of the most popular model for a queue process. It is characterized by arrivals determined by a Poisson process and independent Erlang service times.\\
We introduce a non-Markovian generalization of the classical Erlang $M/E_k/1$ queue process based on the fractional Poisson process, FPP for short (\cite{AlettiLeonenkoMarzbach2018}). There are essentially three approaches to the concept of FPP. The ``renewal'' approach consists of considering the characterization of the Poisson process as a sum of independent non-negative random variables and, instea of the assumption that these random variables have an exponential distribution, we assume that they have the Mittag-Leffler distribution (\cite{Laskin2003,MainardiGorenfloScalas2007,MainardiGorenfloVivoli2005}). In \cite{BeghinOrsingher2009,BeghinOrsingher2010}, the renewal approach to the FPP is developed and it is proven that its $n$-dimensional distributions coincide with the solution to ``fractionalized'' differential-difference equations with fractional derivatives in time. Finally, using ``inverse subordination'', a FPP can be constructed as a time-changed classical Poisson process (\cite{MeerschaertNaneVellaisamy2011}).\\
The first fractional generalization of the classical $M/M/1$ queue process was proposed in \cite{CahoyPolitoPhoha2015}. The transient behaviour of the fractional $M/M/1$ queues with catastrophes (which in the classical case are studied for instance in \cite{DiCrescenzo2003} and further generalized in \cite{DiCrescenzo2018,DiCrescenzo2018b,GiornoNobilePirozzi2018}) is investigated in \cite{AscioneLeonenkoPirozzi2018}.\\
In this paper, we present a study of the transient behaviour of the fractional Erlang queue $M/E_k/1$.\\
In Section \ref{sec2}, we recall the basic definitions and properties for a classical Erlang queue $M/E_k/1$. In particular we recall the formulas for the transient state probabilities, the mean queue length and the distribution of the busy period.\\
In Section \ref{sec3} we give the definition of Fractional Erlang Queue and we determine the fractional forward Kolmogorov equation for the transient state probabilities of such queue.\\
In Section \ref{sec4} the Fractional Erlang Queue process is investigated in terms of inter-arrival times and service times. Such investigation leads to an interpretation of this process in a queueing theory context and the characterization of the distributions of the inter-arrival, inter-phase and service times.\\
In Section \ref{sec5} formulas for transient state probabilities of such process and their Laplace transform are finally obtained.
In Section \ref{sec6} some characteristics of the process are studied. In particular in Subsection \ref{subsect61}, a fractional differential equation for the mean queue length is provided. Moreover, we find also a formula for the mean queue length and its Laplace transform and, by using such formula, we provide an expression for the fractional integral of the probability that the queue is empty. In Subsection \ref{subsec62} we provide also a formula for the distribution of the busy period. In Subsection \ref{subsec63} we focus on the waiting times, determining a formula for a particular conditional waiting time.\\
Finally, in Section \ref{sec7}, we provide an algorithm for sample paths simulation of such process, which is a modified version of the well-known Gillespie algorithm (see \cite{Gillespie1977}).
\section{Erlang Queues}\label{sec2}
Let us recall the definition of Erlang queue. A $M/E_k/1$ queue is a queue with Poisson input (and so exponential independent interarrival times) with parameter $\lambda$ and an Erlang service system with shape parameter $k$ and rate $\mu$. Such service system can be seen as a system composed of $k$ phases each with exponential service time with parameter $k\mu$.\\
Let us denote with $N(t)$ the number of the customers in the system at the time $t \ge 0$. Moreover, if the queue is not empty, let us denote with $S(t)$ the phase of the customer that is currently being served at the time $t \ge 0$. If $N(t)=0$ let us denote also $S(t)=0$. Finally denote $Q(t)=(N(t),S(t))$ for $t \ge 0$, that is a sort of \textit{state-phase} process with state space:
\begin{equation*}
\cS=\{(n,s)\in \N \times \N: \ n \ge 1, \ 1 \le s \le k\}\cup\{(0,0)\}
\end{equation*}
and let us denote $\cS^*=\{(n,s)\in \N \times \N: \ n \ge 1, \ 1 \le s \le k\}$.
Let us define the transient state-phase probability functions as 
\begin{align*}
p_{n,s}(t)&=\bP(Q(t)=(n,s)|Q(0)=(0,0)) & (n,s)\in \cS^*, \ t \ge 0 \\
p_0(t)&=\bP(Q(t)=(0,0)|Q(0)=(0,0)), \ t \ge 0.
\end{align*}
It is well known (see for instance \cite{Saaty1961}) that the function $p_{n,s}(t)$ and $p_0(t)$ satisfy the following difference-differential equations:
{\footnotesize \begin{equation}\label{eq:syssp}
\begin{cases}
\der{p_0}{t}(t)=-\lambda p_0(t)+k\mu p_{1,1}(t) \\
\der{p_{1,s}}{t}(t)=-(\lambda+k\mu)p_{1,s}(t)+k\mu p_{1,s+1}(t) & 1 \le s \le k-1 \\
\der{p_{1,k}}{t}(t)=-(\lambda+k\mu)p_{1,k}(t)+k\mu p_{2,1}(t)+\lambda p_0(t) \\
\der{p_{n,s}}{t}(t)=-(\lambda+k\mu)p_{n,s}(t)+k\mu p_{n,s+1}(t)+\lambda p_{n-1,s}(t) & n\ge 2, \ 1 \le s \le k-1 \\
\der{p_{n,k}}{t}(t)=-(\lambda+k\mu)p_{n,k}(t)+k\mu p_{n+1,1}(t) + \lambda p_{n-1,k}(t) & n\ge 2\\
p_0(0)=1 \\
p_{n,s}(0)=0 & n \ge 1, \ 1 \le s \le k.
\end{cases}
\end{equation}}
In \cite{GrifLeoWill2005} a solution of this system is obtained by means of generalized modified Bessel functions defined in \cite{Luchak1958} as
\begin{equation}\label{eq:genmodBes1}
I_n^k(t)=\left(\frac{t}{2}\right)^n\sum_{r=0}^{+\infty}\frac{\left(\frac{t}{2}\right)^{r(k+1)}}{r!\Gamma(n+rk+1)}, \quad t \in \C, \ n=0,1,2,\dots, \ k=1,2,\dots
\end{equation}
and in \cite{GrifLeoWill2005b} as
\begin{multline}\label{eq:genmodBes2}
I_n^{k,s}(t)=\left(\frac{t}{2}\right)^{n+k-s}\sum_{r=0}^{+\infty}\frac{\left(\frac{t}{2}\right)^{r(k+1)}}{(k(r+1)-s)!\Gamma(n+r+1)} \\ t \in \C, \ n=0,1,2,\dots, \ k=1,2,\dots, \ s=1,2,\dots,k.
\end{multline}
In particular we have:
\begin{equation}
p_0(t)=\sum_{m=1}^{+\infty}m\left(\frac{\lambda}{k\mu}\right)^{-\frac{m}{k+1}}\frac{I_m^k\left(2\left(\frac{\lambda}{k\mu}\right)^{\frac{1}{k+1}}k\mu t\right)}{k\mu t}e^{-(\lambda+k\mu)t}
\end{equation}
while for $n\ge 1$ and $1 \le s \le k-1$:
{\small \begin{align*}
\begin{split}
p_{n,s}(t)&=\left(\frac{\lambda}{k\mu}\right)^{\frac{k(n-1)+s}{k+1}}I_n^{k,s}(2(\lambda(k\mu)^k)^{\frac{1}{k+1}}t)e^{-(\lambda+k\mu)t}\\&+k\mu\left(\frac{\lambda}{k\mu}\right)^{\frac{k(n-1)+s}{k+1}}\int_0^tp_0(z)I_n^{k,s}(2(\lambda(k\mu)^k)^{\frac{1}{k+1}}(t-z))e^{-(\lambda+k\mu)(t-z)}dz\\&-k\mu\left(\frac{\lambda}{k\mu}\right)^{\frac{k(n-1)+s+1}{k+1}}\int_0^tp_0(z)I_n^{k,s+1}(2(\lambda(k\mu)^k)^{\frac{1}{k+1}}(t-z))e^{-(\lambda+k\mu)(t-z)}dz
\end{split}
\end{align*}}
and finally for $n \ge 1$ and $s=k$:
\begin{align*}
\begin{split}
p_{n,k}(t)&=\left(\frac{\lambda}{k\mu}\right)^{\frac{kn}{k+1}}I_n^{k,k}
(2(\lambda(k\mu)^k)^{\frac{1}{k+1}}t)e^{-(\lambda+k\mu)t}\\
&+k\mu\left(\frac{\lambda}{k\mu}\right)^{\frac{kn}{k+1}}
\int_0^tp_0(z)I_n^{k,k}(2(\lambda(k\mu)^k)^{\frac{1}{k+1}}(t-z))
e^{-(\lambda+k\mu)(t-z)}dz\\
&-k\mu\left(\frac{\lambda}{k\mu}\right)^{\frac{kn+1}{k+1}}\int_0^tp_0(z)
I_{n+1}^{k,1}(2(\lambda(k\mu)^k)^{\frac{1}{k+1}}(t-z))e^{-(\lambda+k\mu)(t-z)}dz.
\end{split}
\end{align*}
It will be useful in the following to write the Bessel functions explicitly as power series. In this way we have
\begin{equation}\label{eq:p01}
p_0(t)=\sum_{m=1}^{+\infty}\sum_{r=0}^{+\infty}\frac{m\lambda^r(k\mu)^{m+rk-1}}{r!\Gamma(n+rk+1)}t^{m+r(k+1)-1}e^{-(\lambda+k\mu)t}
\end{equation}
while for $n \ge 1$ and $1 \le s \le k-1$ we have
\begin{align}\label{eq:pns1}
\begin{split}
p_{n,s}(t)&=\sum_{j=0}^{+\infty}\frac{\lambda^{n+j}(k\mu)^{k(j+1)-s}}{(k(j+1)-s)!\Gamma(n+j+1)}t^{n+k-s+j(k+1)}e^{-(\lambda+k\mu)t}\\&+\sum_{j=0}^{+\infty}\frac{\lambda^{n+j}(k\mu)^{k(j+1)-s+1}}{(k(j+1)-s)!\Gamma(n+j+1)}\\&\times\int_0^{t}p_0(z)(t-z)^{n+k-s+j(k+1)}e^{-(\lambda+k\mu)(t-z)}dz\\
&-\sum_{j=0}^{+\infty}\frac{\lambda^{n+j}(k\mu)^{k(j+1)-s}}{(k(j+1)-s-1)!\Gamma(n+j+1)}\\&\times\int_0^tp_0(z)(t-z)^{n+k-s-1+j(k+1)}e^{-(\lambda+k\mu)(t-z)}dz
\end{split}
\end{align}
and finally for $n\ge 1$ and $s=k$ we have
\begin{align}\label{eq:pnk1}
\begin{split}
p_{n,k}(t)&=\sum_{j=0}^{+\infty}\frac{\lambda^{n+j}(k\mu)^{kj}}{(kj)!\Gamma(n+j+1)}t^{n+j(k+1)}e^{-(\lambda+k\mu)t}+\\
&\hspace*{-1cm}+\sum_{j=0}^{+\infty}\frac{\lambda^{n+j}(k\mu)^{kj+1}}{(kj)!\Gamma(n+j+1)}\times\int_0^tp_0(z)(t-z)^{n+j(k+1)}e^{-(\lambda+k\mu)(t-z)}dz\\
&\hspace*{-1cm}-\sum_{j=0}^{+\infty}\frac{\lambda^{n+j+1}(k\mu)^{k(j+1)}}{(k(j+1)-1)!
	\Gamma(n+j+2)}\times\int_0^{t}p_0(z)(t-z)^{n+k+j(k+1)}e^{-(\lambda+k\mu)(t-z)}dz.
\end{split}
\end{align}
We can also give an equivalent representation of the queue by using the {\em queue length process}, that is to say the process $\cL(t)$ given by:
\begin{equation*}
\cL(t)=\begin{cases} k(N(t)-1)+S(t) & N(t)>0 \\
0 & N(t)=0.
\end{cases}
\end{equation*}
The queue length process gives us the length of the queue by means of phases. In particular, since the map $m_k:\cS \to \N_0$ given by:
\begin{equation}\label{eq:mk}
m_k(n,s)=\begin{cases} k(n-1)+s & (n,s)\in \cS^* \\
0 & (n,s)=(0,0)
\end{cases}
\end{equation}
is bijective, each state of $\cL(t)$ represents only one state of $Q(t)$ and vice-versa, so that we can equivalently describe the queue with $Q(t)$ or $\cL(t)$: indeed we have $m_k(Q(t))=\cL(t)$. Let us also recall that the inverse map of $m_k(n,s)$ is given by $(n_k(m),s_k(m))$ where:
\begin{equation}\label{eq:sk}
s_k(m)=\begin{cases}
\min\{s>0: \ s \in [m]_k\} & m>0 \\
0 & m=0
\end{cases}
\end{equation}
where $[m]_k$ is the residual class of $m$ modulo $k$ and:
\begin{equation}\label{eq:nk}
n_k(m)=\begin{cases}
\frac{m-s_k(m)}{k}+1 & m>0 \\
0 & m=0.
\end{cases}
\end{equation}
Let us define the state probabilities of $\cL(t)$ as 
\begin{equation*}
\cP_n(t)=\bP(\cL(t)=n|\cL(0)=0).
\end{equation*}
In \cite{Luchak1956} and \cite{Luchak1958} the forward equations for $\cP_n$ are given as:
\begin{equation}\label{eq:sysql}
\begin{cases}
\der{\cP_0}{t}(t)=-\lambda \cP_0(t)+k\mu \cP_1(t) \\
\der{\cP_n}{t}(t)=-(\lambda+k\mu) \cP_{n}(t)+k\mu \cP_{n+1}(t)+\lambda \sum_{m=1}^{n}c_m\cP_{n-m}(t) & n \ge 1\\
\cP_n(0)=\delta_{n,0} & n \ge 0
\end{cases}
\end{equation}
where $c_m=\delta_{m,k}$ for $M/E_k/1$ queues. However, the bijectivity of $m_k(n,s)$ gives us the equality
\begin{equation*}
\cP_{m}(t)=p_{n_k(m),s_k(m)}(t)
\end{equation*}
so that the difference-differential system \eqref{eq:sysql} is just a compact rewriting of \eqref{eq:syssp} where the states of $Q(t)$ are disposed in lexicographic order.\\
In \cite{Luchak1956} the mean queue length, defined as $M(t)=\E[\cL(t)|\cL(0)=0]$, is show to solve the following Cauchy problem:
\begin{equation*}
\begin{cases}
\der{M}{t}(t)=k(\lambda-\mu)+k\mu\cP_0(t)\\
M(0)=0
\end{cases}
\end{equation*}
which leads, in \cite{Luchak1958}, to the formula:
\begin{equation}\label{eq:meanql1}
M(t)=k(\lambda-\mu)t+k\mu \int_0^t\cP_0(y)dy.
\end{equation}
Moreover, in \cite{Luchak1958}, the distribution function of the busy period has been obtained as
\begin{equation}\label{eq:B1}
B(t)=\sum_{r=0}^{+\infty}\frac{k\lambda^r(k\mu)^{k(r+1)}}{r!\Gamma(rk+k+1)}\int_0^t z^{k+r(k+1)-1}e^{-(\lambda+k\mu)z}dz.
\end{equation}
Finally, let us also recall that the probability density function of the waiting time of a customer that enters the system at time $t>0$ has been given in \cite{Gaver1954} as:
\begin{equation}\label{eq:w1unco}
w(\xi;t)=\sum_{n=0}^{+\infty}\cP_n(t)k\mu\frac{(k\mu t)^{n-1}}{(n-1)!}e^{-k\mu t}.
\end{equation}
The author obtained such formula by using explicitly the Markov property of the queue.
\section{Fractional Erlang Queues}\label{sec3}
In \cite{CahoyPolitoPhoha2015} the authors introduce a fractional version of the $M/M/1$ queue. In particular, such queue has been shown useful to model some financial data for which classical birth-death processes were not enough. Following this idea, now we give the definition of a fractional $M/E_k/1$ queue. Let us consider a classical state-phase process $Q(t)=(N(t),S(t))$ for the $M/E_k/1$ queue and define $\{\sigma_\nu(t)\}_{t \ge 0}$ a $\nu$-stable subordinator independent from $Q(t)$, such that (see, for instace, \cite{MeerschaertSikorskii2011}) 
\begin{equation*}
\E[e^{-v\sigma_\nu(t)}]=e^{-tv^\nu}, \ v>0, \ \nu \in (0,1).
\end{equation*}
Define
\begin{equation*}
L_\nu(t)=\inf\{s \ge 0: \ \sigma_\nu(s)> t\} \quad t\ge 0
\end{equation*}
the inverse of the $\nu$-stable subordinator and pose $N^\nu(t):=N(L_\nu(t))$, $S^\nu(t):=S(L_\nu(t))$ and \begin{equation*}
Q^\nu(t):=(N^\nu(t),S^\nu(t))=Q(L_\nu(t)).
\end{equation*}
We will say that the process $Q^\nu(t)$ is the state-phase process of the fractional Erlang queue $M/E_k/1$. By definition, let us also pose $Q^1(t)=Q(t)$.\\
Before giving an interpretation in the sense of the queueing theory, let us show what are the forward equations for the state probabilities of $Q^\nu(t)$. Let us define:
\begin{align*}
p^\nu_{n,s}(t)&=\bP(Q^\nu(t)=(n,s)|Q^\nu(0)=(0,0)) & (n,s)\in \cS^* \\
p^\nu_0(t)&=\bP(Q^\nu(t)=(0,0)|Q^\nu(0)=(0,0)).
\end{align*}
To obtain such forward equations, we will need the fractional integral (see \cite{LiQianChen2011}) of order $\nu \in (0,1)$ defined, for a function $x:[0,t_1]\subseteq \R \to \R$, as
\begin{equation}
\cI_t^\nu x=\frac{1}{\Gamma(\nu)}\int_{0}^t(t-\tau)^{\nu-1}x(\tau)d\tau
\end{equation}
for any $t \in [0,t_1]$ and the Caputo fractional derivative of order $\nu \in (0,1)$
\begin{equation}
D^\nu_tx=\frac{1}{\Gamma(1-\nu)}\int_0^t\der{x}{t}(\tau)\frac{d\tau}{(t-\tau)^\nu}.
\end{equation}
The classes of functions for which such operators are well-defined are discussed in \cite{MeerschaertSikorskii2011}.\\
Moreover, we will use a Laplace transform approach, as described in \cite{KexuePeng2011,Podlubny1998}. Let us also remark that if $\overline{x}$ is the Laplace transform of the function $x$ and $\cL$ is the Laplace transform operator, then:
\begin{equation}\label{eq:LapDC}
\cL[D_t^\nu x](z)=z^\nu \overline{x}(z)-z^{\nu-1}x(0).
\end{equation}
For the state probabilities we can show the following theorem.
\begin{thm}\label{thm:fracsys}
The state probabilities $(p^\nu_{n,s})_{(n,s)\in \cS}$ are solution of the following fractional difference-differential Cauchy problem:
{\footnotesize \begin{equation}\label{eq:fracsys}
\begin{cases}
D_t^\nu p_0^\nu=-\lambda p_0^\nu(t)+k\mu p_{1,1}^\nu(t) \\
D_t^\nu p_{1,s}^\nu=-(\lambda+k\mu)p_{1,s}^\nu(t)+k\mu p_{1,s+1}^\nu(t) & 1 \le s \le k-1 \\
D_t^\nu p_{1,k}^\nu=-(\lambda+k\mu)p_{1,k}^\nu(t)+k\mu p_{2,1}^\nu(t)+\lambda p_0^\nu(t) \\
D_t^\nu p_{n,s}^\nu=-(\lambda+k\mu)p_{n,s}^\nu(t)+k\mu p_{n,s+1}^\nu(t)+\lambda p_{n-1,s}^\nu(t) & n \ge 2, \ 1 \le s \le k-1 \\
D_t^\nu p_{n,k}^\nu=-(\lambda+k\mu)p_{n,k}^\nu(t)+k\mu p_{n+1,1}^\nu(t)+\lambda p_{n-1,k}^\nu(t) & n \ge 2 \\
p_0^\nu(0)=1 \\
p_{n,s}^\nu(0)=0 & n \ge 1, \ 1 \le s \le k.
\end{cases}
\end{equation}}
where $D_t^\nu$ is the Caputo fractional derivative.
\end{thm}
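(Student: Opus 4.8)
The plan is to obtain the fractional system \eqref{eq:fracsys} from the classical one \eqref{eq:syssp} by exploiting the fact that $Q^\nu(t)=Q(L_\nu(t))$ is a time-change of $Q(t)$ via the inverse stable subordinator, and that such time-changes convert ordinary time derivatives into Caputo fractional derivatives. Concretely, I would first record the relation between the state probabilities of $Q^\nu$ and those of $Q$. Since $L_\nu(t)$ is independent of $Q$, conditioning on its value gives
\begin{equation*}
p^\nu_{n,s}(t)=\int_0^{+\infty}p_{n,s}(y)\,f_{L_\nu(t)}(y)\,dy,\qquad p^\nu_0(t)=\int_0^{+\infty}p_0(y)\,f_{L_\nu(t)}(y)\,dy,
\end{equation*}
where $f_{L_\nu(t)}(\cdot)$ denotes the density of $L_\nu(t)$. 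This is well defined because the $p_{n,s}$ and $p_0$ are bounded (they are probabilities). The initial conditions $p^\nu_0(0)=1$, $p^\nu_{n,s}(0)=0$ follow immediately since $L_\nu(0)=0$ almost surely.

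Next I would invoke the Laplace-transform characterization of the density of the inverse stable subordinator: it is classical (see \cite{MeerschaertSikorskii2011}) that
\begin{equation*}
\int_0^{+\infty}e^{-zt}f_{L_\nu(t)}(y)\,dt=z^{\nu-1}e^{-yz^\nu},\qquad z>0.
\end{equation*}
Taking the Laplace transform in $t$ of the integral representation above and applying Fubini, one gets $\overline{p^\nu_{n,s}}(z)=z^{\nu-1}\,\widehat{p_{n,s}}(z^\nu)$ and $\overline{p^\nu_0}(z)=z^{\nu-1}\,\widehat{p_0}(z^\nu)$, where $\widehat{\cdot}$ is the Laplace transform of the classical (un-fractional) quantities. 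Now I would take the Laplace transform of the classical system \eqref{eq:syssp}, using the ordinary rule $\cL[\der{x}{t}](z)=z\widehat{x}(z)-x(0)$, to obtain, for instance, $z\widehat{p_0}(z)-1=-\lambda\widehat{p_0}(z)+k\mu\widehat{p_{1,1}}(z)$ and analogous algebraic relations for the other equations. Evaluating these at the argument $z^\nu$, multiplying through by $z^{\nu-1}$, and substituting $\overline{p^\nu_{n,s}}(z)=z^{\nu-1}\widehat{p_{n,s}}(z^\nu)$ turns the right-hand sides into exactly $-\lambda\overline{p^\nu_0}(z)+k\mu\overline{p^\nu_{1,1}}(z)$ etc., while the left-hand side becomes $z^\nu\overline{p^\nu_0}(z)-z^{\nu-1}$; by the Caputo Laplace rule \eqref{eq:LapDC} with $p^\nu_0(0)=1$, this is precisely $\cL[D_t^\nu p^\nu_0](z)$. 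Doing this for every line of \eqref{eq:syssp} yields the Laplace transform of \eqref{eq:fracsys}, and uniqueness of Laplace transforms (together with continuity of all functions involved) gives \eqref{eq:fracsys} itself.

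The main obstacle is not the algebra — which is a bookkeeping exercise of matching Laplace transforms line by line — but the analytic justification: one must ensure that the term-by-term Laplace transforms converge, that Fubini applies to the interchange of the $dy$ and $dt$ integrals, that the infinite system of ODEs can be Laplace-transformed coordinatewise, and that the functions $p^\nu_{n,s}$ lie in the class for which the Caputo derivative and the identity \eqref{eq:LapDC} are valid. These points are handled by the boundedness of the probabilities, the summability $\sum_{(n,s)\in\cS}p^\nu_{n,s}(t)+p^\nu_0(t)\le 1$, and the regularity results quoted from \cite{MeerschaertSikorskii2011}; I would also note that an equivalent and perhaps cleaner route is to apply the general subordination principle (Doob-type transformation / the results of \cite{MeerschaertNaneVellaisamy2011}) directly to the Markov generator underlying \eqref{eq:syssp}, which automatically produces the Caputo-fractionalized forward equations, but the Laplace-transform computation above is self-contained and I would present that.
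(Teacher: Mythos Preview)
Your proposal is correct and uses essentially the same ingredients as the paper --- the subordination representation $p^\nu_{n,s}(t)=\int_0^\infty p_{n,s}(y)\,\bP(L_\nu(t)\in dy)$, the Laplace-transform identity \eqref{eq:Lapinvstab}, and reduction to the classical system \eqref{eq:syssp} --- but the execution differs in one respect worth noting. The paper does not work coordinatewise: it first packages the entire system \eqref{eq:fracsys} into a single equation for the probability generating function $G^\nu(z,t)=\sum_{n,s}z^{k(n-1)+s}p^\nu_{n,s}(t)$, takes the Laplace transform of that one equation, and then verifies it using the corresponding (known) equation for $G^1(z,t)$ together with integration by parts. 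Your line-by-line Laplace argument is arguably more transparent and avoids the bookkeeping of setting up and unpacking the generating function; the paper's route, on the other hand, handles the infinite system in one stroke and makes the analytic justifications (Fubini, convergence) easier because only the single bounded function $G^1(z,\cdot)$ appears under the integral rather than infinitely many coordinates.
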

\begin{proof}
Let us define the probability generating function
\begin{equation*}
G^\nu(z,t)=\sum_{n=1}^{+\infty}\sum_{s=1}^{k}z^{k(n-1)+s}p_{n,s}^\nu(t), \ |z|\le 1.
\end{equation*}
By multiplying the second equation of \eqref{eq:fracsys} by $z^{s+1}$, the third equation of \eqref{eq:fracsys} by $z^{k+1}$, the fourth equation of \eqref{eq:fracsys} by $z^{k(n-1)+s+1}$, the fifth equation of \eqref{eq:fracsys} by $z^{kn+1}$ and the summing all these equations with respect to $n$ and $s$ we have that the state probabilities $p^\nu_{n,s}$ solve this Cauchy problem if and only if $G^\nu(z,t)$ solves the following Cauchy problem:
\begin{equation}\label{eq:fracgenprob}
\begin{cases}
z{}D_t^\nu G^\nu(z,\cdot)=(1-z)[G^\nu(z,t)(k\mu-\lambda(z+\dots+z^k))-k\mu p_0^\nu(t)]\\
G^\nu(z,0)=0.
\end{cases}
\end{equation}
Denoting with $\widetilde{G}^\nu(z,v)$ and $\pi_0^\nu(v)$ the Laplace transform of $G^\nu(z,t)$ and $p_0^\nu(t)$, we know that $G^\nu(z,t)$ solves the Cauchy problem \eqref{eq:fracgenprob} if and only if its Laplace transform solves:
\begin{equation}\label{eq:tildeG}
z v^\nu \widetilde{G}^\nu(z,v)=(1-z)[\widetilde{G}^\nu(z,v)(k\mu-\lambda(z+\dots+z^k))-k\mu \pi_0^\nu(v)]
\end{equation}
where we used the formula for the Laplace transform of the Caputo derivative given in formula \eqref{eq:LapDC}.
Now let us observe that:
\begin{align}\label{eq:pnsnuint}
\begin{split}
&p_{n,s}^\nu(t)\!=\!\bP(Q^\nu(t)\!=\!(n,s)|Q^\nu(0)\!=\!(0,0))\!=\!\bP(Q(L_\nu(t))\!=\!(n,s)|Q(0)\!=\!(0,0))\\
&=\hspace*{-0.2cm}\int_{0}^{+\infty}\hspace*{-0.5cm}\bP(Q(y)\!=\!(n,s)|Q(0)=\!=\!(0,0))\bP(L_\nu(t)\in dy)\!=\!\int_0^{+\infty}p_{n,s}^1(t)\bP(L_\nu(t)\in dy)
\end{split}
\end{align}
and, analogously:
\begin{equation}\label{eq:p0}
p_0^\nu(t)=\int_0^{+\infty}p_0^1(y)\bP(L_\nu(t)\in dy).
\end{equation}
Thus for the probability generating function we know that
\begin{equation}\label{eq:Gnu}
G^\nu(z,t)=\int_0^{+\infty}G^1(z,y)\bP(L_\nu(t)\in dy).
\end{equation}
Let us recall from \cite{MeerschaertStraka2013} the following Laplace transform
\begin{equation}\label{eq:Lapinvstab}
\cL[\cP(L_\nu(t)\in dy)](v)=v^{\nu-1}e^{-yv^\nu}dy, \ v>0.
\end{equation}
Thus by using Eq. \eqref{eq:Lapinvstab} in Eqs. \eqref{eq:p0} and \eqref{eq:Gnu} we obtain:
\begin{align}
\pi_0^\nu(v)&=\int_0^{+\infty}p_0^1(y)v^{\nu-1}e^{-yv^\nu}dy\label{eq:pi0},
\widetilde{G}^\nu(z,v)&=\int_0^{+\infty}G^1(z,y)v^{\nu-1}e^{-yv^\nu}dy.
\end{align}
Using these two formulas we have that $\widetilde{G}^\nu(z,v)$ solves Eq. \eqref{eq:tildeG} if and only if
\begin{align}\label{eq:intlapeq}
\begin{split}
z v^\nu &\int_0^{+\infty}G^1(z,y)e^{-yv^\nu}dy=\\&=\int_0^{+\infty}(1-z)[G^1(z,y)(k\mu-\lambda(z+\dots+z^k))-k\mu p_0^1(y)]e^{-yv^\nu}dy.
\end{split}
\end{align}
From \cite{GrifLeoWill2005} we know that $G^1(z,t)$ is solution of the following Cauchy problem:
\begin{equation}\label{eq:genprob}
\begin{cases}
z\pd{G^1}{t}(z,t)=(1-z)[G^1(z,t)(k\mu-\lambda(z+\dots+z^k))-k\mu p_0^1(t)]\\
G^1(z,0)=0.
\end{cases}
\end{equation}
Using such property in \eqref{eq:intlapeq} we obtain
\begin{equation*}
zv^\nu \int_0^{+\infty}G^1(z,y)e^{-yv^\nu}dy=z\int_0^{+\infty}\pd{G^1(z,y)}{t}e^{-yv^\nu}dy
\end{equation*}
which, by using the integration by parts formula, gives us an identity.
\end{proof}
Let us introduce, as we did for the classical Erlang queue, the queue length process:
\begin{equation*}
\cL^\nu(t)=\begin{cases} 0 & N^\nu(t)=0\\
k(N^\nu(t)-1)+S^\nu(t) & N^\nu(t)\not = 0
\end{cases}
\end{equation*}
and define $\cP_m^\nu(t):=\bP(\cL^\nu(t)=m|\cL^\nu(0)=0)$. By the equality $\cP_m^\nu(t)=p^\nu_{n_k(m),s_k(m)}(t)$ and Theorem \ref{thm:fracsys}, we know that $(\cP_m^\nu(t))_{m \in \N}$ is solution of the fractional Cauchy problem
{\footnotesize \begin{equation}\label{eq:fracsysql}
\begin{cases}
{}D_t^\nu\cP_0^\nu=-\lambda \cP_0^\nu(t)+k\mu \cP_1^\nu(t) \\
{}D_t^\nu\cP_n^\nu=-(\lambda+k\mu) \cP_{n}^\nu(t)+k\mu \cP_{n+1}^\nu(t)+\lambda \sum_{m=1}^{n}c_m\cP_{n-m}^\nu(t) & n \ge 1\\
\cP_n^\nu(0)=\delta_{n,0} & n \ge 0
\end{cases}
\end{equation}}
where $c_j=\delta_{j,k}$, which is equivalent to the fractional Cauchy problem \eqref{eq:fracsys}. Since the solutions $\cP^\nu_m(t)$ are defined as probabilities, by using Corollary $2$ of \cite{AscioneLeonenkoPirozzi2018} and Schur's test (see \cite{HalmosSunder2012}) one can prove the following:
\begin{cor}\label{cor:unique}
The fractional Cauchy problems \eqref{eq:fracsys} and \eqref{eq:fracsysql} admit unique global solutions $\mathbf{p}^\nu(t)=(p_{n,s}^\nu(t))_{(n,s) \in \cS}$ and $\mathbf{P}^\nu(t)=(\cP_m^\nu(t))_{m \in \N}$, where $\mathbf{p}^\nu,\mathbf{P}^\nu:[0,+\infty) \to l^2$.
\end{cor}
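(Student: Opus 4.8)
The plan is to recast \eqref{eq:fracsys} and \eqref{eq:fracsysql} as a single abstract fractional Cauchy problem and then split the statement into an (essentially free) existence part and the substantive uniqueness part. Writing \eqref{eq:fracsysql} --- and, via the bijection $m_k$ of \eqref{eq:mk}, equivalently \eqref{eq:fracsys} --- in the form $D_t^\nu\mathbf{u}(t)=A\mathbf{u}(t)$, $\mathbf{u}(0)=\mathbf{u}_0$, where $A$ is the infinite matrix read off the right--hand side, the decisive preliminary step is to prove that $A$ is a \emph{bounded} operator on $l^2$. For this I would use Schur's test: each row of $A$ has at most three nonzero entries, of moduli among $\lambda$, $k\mu$ and $\lambda+k\mu$, so $\sup_n\sum_m|A_{nm}|\le 2(\lambda+k\mu)$; and, crucially, since $c_m=\delta_{m,k}$, each column of $A$ also has only finitely many nonzero entries of the same moduli, so $\sup_m\sum_n|A_{nm}|\le 2(\lambda+k\mu)$. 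Schur's test (\cite{HalmosSunder2012}) then yields $A\in\cL(l^2,l^2)$ with $\Norm{A}{\cL(l^2,l^2)}\le 2(\lambda+k\mu)$.

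For existence I would take the vector $\mathbf{p}^\nu(t)=(p^\nu_{n,s}(t))_{(n,s)\in\cS}$ of genuine state--phase probabilities of $Q^\nu(t)$, which by Theorem~\ref{thm:fracsys} solves \eqref{eq:fracsys} coordinatewise. Since $0\le p^\nu_{n,s}(t)\le1$ and, $Q$ being non--explosive and $L_\nu(t)<+\infty$ almost surely, $\sum_{(n,s)\in\cS}p^\nu_{n,s}(t)=\int_0^{+\infty}\big(\sum_{(n,s)\in\cS}p^1_{n,s}(y)\big)\bP(L_\nu(t)\in dy)=1$ for all $t\ge0$, the vector $\mathbf{p}^\nu(t)$ lies in $l^1\subseteq l^2$ with $\Norm{\mathbf{p}^\nu(t)}{l^2}\le1$. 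Coordinatewise continuity in $t$ --- clear from \eqref{eq:pnsnuint}--\eqref{eq:p0} and the almost sure continuity of $L_\nu$ --- together with conservation of total mass gives, by Scheff\'e's lemma, continuity of $t\mapsto\mathbf{p}^\nu(t)$ as an $l^1$-- and hence $l^2$--valued curve; passing to the equivalent integrated form $\mathbf{p}^\nu(t)=\mathbf{p}^\nu(0)+\cI_t^\nu(A\mathbf{p}^\nu)$, which now makes sense in $l^2$ because $A$ is bounded, shows that $\mathbf{p}^\nu$ is a global $l^2$ solution of \eqref{eq:fracsys}. Transferring along $m_k$ produces a global $l^2$ solution $\mathbf{P}^\nu$ of \eqref{eq:fracsysql}.

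Uniqueness then follows from the abstract theory of fractional Cauchy problems with bounded generator: since $A\in\cL(l^2,l^2)$ and $\mathbf{u}_0\in l^2$, Corollary~2 of \cite{AscioneLeonenkoPirozzi2018} applies and shows that $D_t^\nu\mathbf{u}=A\mathbf{u}$, $\mathbf{u}(0)=\mathbf{u}_0$, has the unique global solution $\mathbf{u}(t)=E_\nu(t^\nu A)\mathbf{u}_0$. If a self--contained argument is preferred: the difference $\mathbf{w}$ of two $l^2$ solutions satisfies $\mathbf{w}(0)=0$ and the Volterra equation $\mathbf{w}(t)=\cI_t^\nu(A\mathbf{w})$, and iterating this $n$ times gives $\Norm{\mathbf{w}(t)}{l^2}\le\frac{(\Norm{A}{\cL(l^2,l^2)}\,t^\nu)^n}{\Gamma(n\nu+1)}\sup_{[0,t]}\Norm{\mathbf{w}}{l^2}$, which tends to $0$ as $n\to+\infty$, so $\mathbf{w}\equiv0$. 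Uniqueness for \eqref{eq:fracsys} is transferred from uniqueness for \eqref{eq:fracsysql} (or conversely) through $m_k$.

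The only genuinely delicate point I anticipate is the boundedness of $A$ on $l^2$, and within it the uniform bound on the column sums --- this is exactly where the combinatorial structure $c_m=\delta_{m,k}$ of the $M/E_k/1$ queue enters, and for a weight sequence $(c_m)$ of infinite support $A$ would fail to be bounded on $l^2$ and a different functional--analytic framework would be needed. Once boundedness is established, the remaining arguments --- existence from the probabilistic representation and uniqueness from abstract fractional semigroup theory --- are routine, modulo the bookkeeping of passing back and forth along $m_k$.
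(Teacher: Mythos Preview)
Your proposal is correct and follows essentially the same approach as the paper, which merely sketches the argument in one sentence: existence from the probabilistic definition of the $\cP^\nu_m(t)$ as state probabilities (hence in $l^1\subseteq l^2$), and uniqueness from Corollary~2 of \cite{AscioneLeonenkoPirozzi2018} together with Schur's test to verify boundedness of the generator on $l^2$. You have simply made explicit the details the paper leaves to the reader, including the continuity check and the alternative self--contained Volterra iteration for uniqueness.
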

\section{Interpretation of the fractional $M/E_k/1$ queue}\label{sec4}
By using the result given in Theorem \ref{thm:fracsys} we can show what are the main features of a fractional $M/E_k/1$ queue in terms of interarrival and service times. Let us define:
\begin{align*}
h^\nu_n(t)&:=\bP(N^\nu(t)=n|Q^\nu(0)=(0,0)) \ n \ge 1\\
q^\nu_s(t)&:=\bP(S^\nu(t)=s|Q^\nu(0)=(0,0)) \ 1 \le s \le k.
\end{align*}
Note that
\begin{align*}
h^\nu_n(t)=\sum_{s=1}^{k}p^\nu_{n,s}(t) && q^\nu_s(t)=\sum_{n=1}^{+\infty}p^\nu_{n,s}(t).
\end{align*}
Let us work with the functions $h_n(t)$. By summing the equations of \eqref{eq:fracsys} with respect to $s$ we obtain the following fractional Cauchy problem
\begin{equation*}
\begin{cases}
D_t^\nu p_0^\nu=-\lambda p_0^\nu(t)+k\mu p_{1,1}^\nu(t) \\
D_t^\nu h_1^\nu=-\lambda h_1^\nu(t)+k\mu(p_{2,1}^\nu(t)-p_{1,1}^\nu(t))+\lambda p_0^\nu(t)\\
D_t^\nu h_n^\nu=-\lambda h_n^\nu(t)+k\mu(p_{n+1,1}^\nu(t)-p_{n,1}^\nu(t))+\lambda h_{n-1}^\nu(t) & n \ge 2\\
p_0^\nu(t)=1 \\
h_n^\nu(t)=0 & n \ge 1
\end{cases}
\end{equation*}
that, for $\nu=1$, gives us
\begin{equation*}
\begin{cases}
\der{p_0^1}{t}(t)=-\lambda p_0^1(t)+k\mu p_{1,1}^1(t) \\
\der{h_1^1}{t}(t)=-\lambda h_1^1(t)+k\mu(p_{2,1}^1(t)-p_{1,1}^1(t))+\lambda p_0^1(t)\\
\der{h_n^1}{t}(t)=-\lambda h_n^1(t)+k\mu(p_{n+1,1}^1(t)-p_{n,1}^1(t))+\lambda h_{n-1}^1(t) & n \ge 2\\
p_0^1(t)=1 \\
h_n^1(t)=0 & n \ge 1.
\end{cases}
\end{equation*}
Before proceeding, let us give some notation. Let $T$ be a random variable with distribution function given by
\begin{equation*}
F_T(t)=1-E_\nu(-\lambda t^\nu), \ t \ge 0, \ \lambda >0,
\end{equation*}
where $E_\nu$ is the Mittag-Leffler function (see \cite{KilbasSrivastavaTrujillo2006}) defined as
\begin{equation}\label{eq:ML1}
E_\nu(z)=\sum_{k=0}^{+\infty}\frac{z^k}{\Gamma(\nu k+1)}, \nu>0, z \in \C;
\end{equation}
we call $T$ a {\em Mittag-Leffler random variable} with fractional index $\nu$ and parameter $\lambda$ and we denote it by $T \sim ML_\nu(\lambda)$. In particular let us denote $\Psi(t)=E_\nu(-\lambda t^\nu)$, $f_T$ the probability density function of $T$ and 
denoting with $\overline{f}_T$ its Laplace transform and recalling the Laplace transform of the Mittag-Leffler function that can be obtained from \eqref{eq:ML3LT} by posing $\delta=\gamma=1$, we have
\begin{equation}\label{eq:MLdensLT}
\overline{f}_T(v)=\frac{\lambda}{\lambda+v^\nu},\  v >0.
\end{equation}
Now we can show the following Theorem.
\begin{thm}\label{thm:interarr}
The interarrival times of a $\nu$-fractional $M/E_k/1$ queue are i.i.d. Mittag-Leffler random variables with fractional index $\nu$ and parameter $\lambda$.
\end{thm}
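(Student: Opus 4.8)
The plan is to bypass the forward equations for this statement and argue directly from the path construction $Q^\nu(t)=Q(L_\nu(t))$, reducing the claim to a renewal property of the inverse stable subordinator composed with the Poisson arrival stream of the classical queue. First I would isolate the arrival process of the classical $M/E_k/1$ queue: write $A(t)$ for its $\mathrm{Poisson}(\lambda)$ input, $0=\sigma_0<\sigma_1<\sigma_2<\cdots$ for its arrival epochs, and $\tau_i=\sigma_i-\sigma_{i-1}$ for the classical interarrival times, so that $(\tau_i)_{i\ge1}$ is an i.i.d. $\mathrm{Exp}(\lambda)$ sequence (equivalently $ML_1(\lambda)$) independent of $\{\sigma_\nu(t)\}_{t\ge0}$ and of the service mechanism. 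Since $L_\nu$ is a.s. continuous and non-decreasing, a jump upward of $N^\nu$ — i.e. an arrival of $Q^\nu$ — occurs exactly when $L_\nu$ first attains a level $\sigma_i$; by the first-passage description of the inverse subordinator, $\inf\{t:L_\nu(t)\ge u\}=\sigma_\nu(u^-)$, and since $\{\sigma_\nu(t)\}_{t\ge0}$ a.s. has no fixed discontinuities while the $\sigma_i$ are independent of it, one obtains $\sigma_\nu(\sigma_i^-)=\sigma_\nu(\sigma_i)$ a.s. Hence the interarrival times of $Q^\nu$ are $D_i^\nu:=\sigma_\nu(\sigma_i)-\sigma_\nu(\sigma_{i-1})$, $i\ge1$.

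Second, I would show that $(D_i^\nu)_{i\ge1}$ is i.i.d. with the common law of $\sigma_\nu(\tau)$, where $\tau\sim ML_1(\lambda)$ is independent of $\{\sigma_\nu(t)\}_{t\ge0}$. Writing $D_i^\nu=\sigma_\nu(\sigma_{i-1}+\tau_i)-\sigma_\nu(\sigma_{i-1})$ and conditioning on $(\tau_1,\dots,\tau_i)$, the stationarity and independence of the increments of $\sigma_\nu$ give that, conditionally, $D_i^\nu$ has the law of $\sigma_\nu(\tau_i)$ and is independent of $D_1^\nu,\dots,D_{i-1}^\nu$; integrating out the independent $\tau_i$ then yields the assertion. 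Equivalently, one may simply note that $A(L_\nu(\cdot))$ is the fractional Poisson process and invoke its known renewal representation with Mittag-Leffler waiting times, as in \cite{MeerschaertNaneVellaisamy2011,MainardiGorenfloScalas2007}.

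Third, I would identify the common law through its Laplace transform. Conditioning on $\tau$ and using $\E[e^{-v\sigma_\nu(t)}]=e^{-tv^\nu}$ together with \eqref{eq:MLdensLT},
\[
\E\!\left[e^{-vD_1^\nu}\right]=\E\!\left[\E\!\left[e^{-v\sigma_\nu(\tau)}\mid\tau\right]\right]=\E\!\left[e^{-\tau v^\nu}\right]=\frac{\lambda}{\lambda+v^\nu}=\overline{f}_T(v),
\]
so by injectivity of the Laplace transform $D_1^\nu\sim ML_\nu(\lambda)$, which proves the theorem.

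The step I expect to demand the most care is the reduction in the first paragraph, namely the identity ``($i$-th arrival epoch of $Q^\nu$)$=\sigma_\nu(\sigma_i)$ a.s.'': it rests on the sample-path relation between $L_\nu$ and $\sigma_\nu$ (constancy of $L_\nu$ on the closures of the jump intervals of $\sigma_\nu$), on the a.s. absence of fixed discontinuities of $\sigma_\nu$, and on the independence of $(\sigma_i)_{i\ge1}$ from $\sigma_\nu$ — all standard, but worth spelling out explicitly. Making the conditional-independence argument of the second paragraph rigorous, e.g. relative to the filtration $\mathcal{G}_i=\sigma(\tau_1,\dots,\tau_i,\ \sigma_\nu(r):r\le\sigma_i)$, is the other point that merits a short careful argument; everything else is routine.
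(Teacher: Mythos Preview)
Your proposal is correct and takes a genuinely different route from the paper. The paper argues via the forward equations: it builds an auxiliary two-state birth process $\widetilde N^1$ with absorbing state $m+1$, time-changes it by $L_\nu$, verifies through Laplace transforms that the resulting state probabilities solve the fractional Cauchy problem $D_t^\nu \widetilde h_m^\nu=-\lambda\widetilde h_m^\nu$, $\widetilde h_m^\nu(0)=1$, and reads off $\widetilde h_m^\nu(t)=E_\nu(-\lambda t^\nu)$; independence of the interarrival times is dismissed in one line as ``follows easily from the definition''. You instead bypass the differential equations entirely and work pathwise: you identify the $i$-th arrival epoch of $Q^\nu$ as $\sigma_\nu(\sigma_i)$, use the stationary independent increments of $\sigma_\nu$ together with the i.i.d.\ $\mathrm{Exp}(\lambda)$ spacings $\tau_i$ to get that the $D_i^\nu$ are i.i.d., and then compute one Laplace transform. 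What your approach buys is a cleaner and fully explicit proof of the i.i.d.\ property (which the paper does not really spell out), plus the transparent identification of the arrival stream of $Q^\nu$ with the fractional Poisson process $A(L_\nu(\cdot))$, whose renewal structure is already in \cite{MeerschaertNaneVellaisamy2011,MainardiGorenfloScalas2007}. What the paper's approach buys is methodological consistency with Theorem~\ref{thm:fracsys}: it shows that the forward system \eqref{eq:fracsys} alone is enough to recover the interarrival law, and the same template is reused verbatim for Theorems on inter-phase times and sojourn times. Your self-flagged delicate points (the a.s.\ identity $\sigma_\nu(\sigma_i^-)=\sigma_\nu(\sigma_i)$ via independence and absence of fixed discontinuities, and the filtration argument for conditional independence) are exactly the places that need the extra sentence or two, and you have identified them correctly.
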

\begin{proof}
The independence of the interarrival times of a fractional $M/E_k/1$ queue follows easily from the definition. Let us show that these interarrival times are Mittag-Leffler distributed. To do this, let us introduce the process $\widetilde{N}^1(t)$ as a modification of $N^1(t)$ such that the state probabilities 
\begin{equation*}
\widetilde{h}^1_{n}(t):=\bP(\widetilde{N}^1(t)=n) \ n \ge 0
\end{equation*}
are solution of the following Cauchy problem
\begin{equation*}
\begin{cases}
\der{\widetilde{h}_{m}^1}{t}(t)=-\lambda \widetilde{h}_{m}^1(t) \\
\der{\widetilde{h}_{m+1}^1}{t}(t)=\lambda \widetilde{h}_{m}^1\\
\der{\widetilde{h}_n^1}{t}(t)=0 \ n\ge 0, n \not = m,m+1 \\
\widetilde{h}_n^1(0)=\delta_{n,m}
\end{cases}
\end{equation*}
that is to say that $\widetilde{N}^1(t)$ is a birth process ($\mu=0$) that has the same birth rate $\lambda$ as $N^1(t)$, starts at $m$ ad admits $m+1$ as absorbent state. In particular the Cauchy problem can be reduced to a couple of linear ODEs:
\begin{equation*}
\begin{cases}
\der{\widetilde{h}_m^1}{t}(t)=-\lambda \widetilde{h}_m^1(t) \\
\der{\widetilde{h}_{m+1}^1}{t}(t)=\lambda \widetilde{h}_m^1(t)\\
\widetilde{h}_m^1(0)=1 \\
\widetilde{h}_{m+1}^1(0)=0.
\end{cases}
\end{equation*}
Moreover, since they are probabilities, we have also that $\widetilde{h}_m^1(t)+\widetilde{h}_{m+1}^1(t)=1$ for any $t \ge 0$. Finally, denoting with $T^1$ the arrival time of the first new customer of the queue $Q^1$ starting from $N^1(0)=m$, we have that $\widetilde{h}_{m+1}^1(t)=F_{T^1}(t)$. \\
Now let us define $\widetilde{N}^\nu(t)=\widetilde{N}^1(L_\nu(t))$ and
\begin{align*}
\widetilde{h}^\nu_{m}(t):=\bP(\widetilde{N}^\nu(t)=m) &&
\widetilde{h}^\nu_{m+1}(t):=\bP(\widetilde{N}^\nu(t)=m+1).
\end{align*}
By definition, $\widetilde{N}^\nu(t)$ is a process that starts from $m$ and admits $m+1$ as absorbent state. Moreover, denoting with $T^\nu$ the arrival time of the first new customer in the queue $Q^\nu$ starting from $N^\nu(0)=m$, we have that $\widetilde{h}_{m+1}^\nu(t)=F_{T^\nu}(t)$. Indeed, since $N^1(t)$ is a Markov process, $N^\nu(t)$ is a semi-Markov process and, denoting with $T_n$ the $n$-th jump time, the discrete time process $N^\nu(T_n)$ is a time-homogeneous Markov chain (see, for instance, \cite{GihmanSkorohod1975}). For this reason, $T^\nu$ can be seen as the interarrival time of the queue $Q^\nu$.\\
Now we have to show that
\begin{equation}\label{eq:intarrpass1}
\begin{cases}
D_t^\nu\widetilde{h}_m^\nu=-\lambda \widetilde{h}_m^\nu(t) \\
D_t^\nu\widetilde{h}_{m+1}^\nu=\lambda \widetilde{h}_m^\nu(t)\\
\widetilde{h}_m^\nu(0)=1 \\
\widetilde{h}_{m+1}^\nu(0)=0.
\end{cases}
\end{equation}
The initial conditions are obviously verified. Now let us work with the first equation. We know that this equation is verified if and only if, denoting with $\overline{h}_m^\nu$ the Laplace transform of $\widetilde{h}_m^\nu$:
\begin{equation}\label{eq:intarrpass2}
v^\nu \overline{h}_m^\nu(v)-v^{\nu-1}=-\lambda \overline{h}_m^\nu(v), \ v>0.
\end{equation}
By using the definition of $\widetilde{h}_m^\nu(t)$ we have that:
\begin{equation*}
\widetilde{h}_m^\nu(t)=\int_0^{+\infty}\widetilde{h}_m^1(y)\bP(L_\nu(t)\in dy)
\end{equation*}
and thus, taking the Laplace transform
\begin{equation*}
\overline{h}_m^\nu(v)=v^{\nu-1}\int_0^{+\infty}\widetilde{h}_m^1(y)e^{-yv^\nu}dy.
\end{equation*}
Using this formula, we know that Eq. \eqref{eq:intarrpass2} is verified if and only if
\begin{equation}\label{eq:intarrpass3}
v^\nu \int_0^{+\infty} \widetilde{h}^1_m(y)e^{-yv^\nu}dy-1=\int_{0}^{+\infty}-\lambda \widetilde{h}^1_m(y)e^{-yv^\nu}dy.
\end{equation}
Recalling that $\widetilde{h}^1_m(t)$ is solution of the following Cauchy problem
\begin{equation*}
\begin{cases}
\der{\widetilde{h}^1_m}{t}(t)=-\lambda \widetilde{h}^1_m(t)\\
\widetilde{h}^1_m(0)=1
\end{cases}
\end{equation*}
we have that Eq. \eqref{eq:intarrpass3} is verified if and only if
\begin{equation*}
v^\nu \int_0^{+\infty}\widetilde{h}^1_m(y)e^{-yv^\nu}dy-1=\int_0^{+\infty}\der{\widetilde{h}^1_m(y)}{t}e^{-yv^\nu}dy
\end{equation*}
that, by using the integration by parts formula, gives us an identity. In an analogous way, one can show that the second equation of \eqref{eq:intarrpass1} is verified.\\
Finally, we only have to solve the fractional Cauchy problem \eqref{eq:intarrpass1}. To do this, let us observe that the first equation is independent from the second one, so we can solve first the Cauchy problem:
\begin{equation*}
\begin{cases}
D_t^\nu\widetilde{h}_m^\nu=-\lambda \widetilde{h}_m^\nu(t) \\
\widetilde{h}_m^\nu(0)=1 \\
\end{cases}
\end{equation*}
whose solution is $\widetilde{h}_m^\nu(t)=E_\nu(-\lambda t^\nu)$ (see \cite{KilbasSrivastavaTrujillo2006}). Finally:
\begin{equation*}
F_{T^\nu}(t)=\widetilde{h}_{m+1}^\nu(t)=1-\widetilde{h}_m^\nu(t)=1-E_\nu(-\lambda t^\nu), \ t \ge 0.
\end{equation*}
\end{proof}
Now let us work with the functions $q_s^\nu(t)$, in order to obtain some information on the service times. Summing the equations in \eqref{eq:fracsys} with respect to $n$ we have:
\begin{equation*}
\begin{cases}
D_t^\nu p_0^\nu=-\lambda p_0^\nu(t) + k \mu p_{1,1}^\nu(t)\\
D_t^\nu q_s^\nu=-k\mu q_s^\nu(t)+k\mu q_{s+1}^\nu(t) & 1 \le s \le k-1 \\
D_t^\nu q_k^\nu=-k\mu q_k^\nu(t)+k\mu q_1^\nu(t)-k\mu p_{1,1}^\nu(t)+\lambda p_0^\nu(t)\\
p_0^\nu(0)=1\\
q_s^\nu(0)=0 & 1 \le s \le k
\end{cases}
\end{equation*}
that, for $\nu=1$, gives us
\begin{equation*}
\begin{cases}
\der{p_0^1}{t}(t)=-\lambda p_0^1(t) + k \mu p_{1,1}^1(t)\\
\der{q_s^1}{t}(t)=-k\mu q_s^1(t)+k\mu q_{s+1}^1(t) & 1 \le s \le k-1 \\
\der{q_k^1}{t}(t)=-k\mu q_k^1(t)+k\mu q_1^1(t)-k\mu p_{1,1}^1(t)+\lambda p_0^1(t)\\
p_0^1(0)=1\\
q_s^1(0)=0 & 1 \le s \le k.
\end{cases}
\end{equation*}
We can use the functions $q_s^\nu$ exactly in the same way we did with $h_n^\nu$ to show the following Theorem.
\begin{thm}\label{thm:interph}
The inter-phase times of a $\nu$-fractional $M/E_k/1$ queue are i.i.d. Mittag-Leffler random variables with fractional index $\nu$ and parameter $k\mu$.
\end{thm}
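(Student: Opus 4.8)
The plan is to transcribe the proof of Theorem~\ref{thm:interarr}, replacing the arrival clock by the phase-advancement clock. The starting point is the observation that, while a customer is served in phase $s \in \{1,\dots,k\}$, arrivals (rate $\lambda$) and phase completion (rate $k\mu$) are independent competing mechanisms, and an arrival does not change the phase; hence the time for which $S^1(t)$ stays equal to $s$ is exponential with parameter $k\mu$, irrespective of how many arrivals occur meanwhile. Accordingly I would introduce a modification $\widetilde{S}^1(t)$ of $S^1(t)$ that retains only the dynamics of a single phase transition: it starts from $s$, advances to the next phase at rate $k\mu$, and makes that next phase (call it $s+1$, with the understanding that the sojourn straddling a service completion, i.e.\ the passage from phase $k$ to the fresh phase $1$ of the next customer, is again exponential$(k\mu)$) an absorbing state. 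Writing $\widetilde{q}_s^1(t)=\bP(\widetilde{S}^1(t)=s)$, $\widetilde{q}_{s+1}^1(t)=\bP(\widetilde{S}^1(t)=s+1)$, these solve
\begin{equation*}
\begin{cases}
\der{\widetilde{q}_s^1}{t}(t)=-k\mu\,\widetilde{q}_s^1(t)\\
\der{\widetilde{q}_{s+1}^1}{t}(t)=k\mu\,\widetilde{q}_s^1(t)\\
\widetilde{q}_s^1(0)=1,\quad \widetilde{q}_{s+1}^1(0)=0,
\end{cases}
\end{equation*}
with $\widetilde{q}_s^1(t)+\widetilde{q}_{s+1}^1(t)=1$, and, denoting by $R^1$ the first inter-phase time of $Q^1$ issued from phase $s$, one has $\widetilde{q}_{s+1}^1(t)=F_{R^1}(t)$. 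That $k\mu$ is the correct rate and that the successive inter-phase times are i.i.d.\ (and independent of the arrival stream) is precisely what the $q_s^1$-system obtained above by summing \eqref{eq:fracsys} over $n$ records.

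Next I would set $\widetilde{S}^\nu(t):=\widetilde{S}^1(L_\nu(t))$. Since $S^1(t)$ is Markov, $S^\nu(t)=S^1(L_\nu(t))$ is semi-Markov and the chain sampled at its jump times is a time-homogeneous Markov chain; therefore the first absorption time $R^\nu$ of $\widetilde{S}^\nu$ is a genuine inter-phase time of $Q^\nu$, and independence of the inter-phase times of $Q^\nu$ is inherited from that of $Q^1$ through the single time-change $L_\nu$. Then, exactly as for \eqref{eq:intarrpass1}, I would show that $\widetilde{q}_s^\nu(t):=\bP(\widetilde{S}^\nu(t)=s)$ and $\widetilde{q}_{s+1}^\nu(t):=\bP(\widetilde{S}^\nu(t)=s+1)$ solve
\begin{equation*}
\begin{cases}
D_t^\nu\widetilde{q}_s^\nu=-k\mu\,\widetilde{q}_s^\nu(t)\\
D_t^\nu\widetilde{q}_{s+1}^\nu=k\mu\,\widetilde{q}_s^\nu(t)\\
\widetilde{q}_s^\nu(0)=1,\quad \widetilde{q}_{s+1}^\nu(0)=0,
\end{cases}
\end{equation*}
by writing $\widetilde{q}_s^\nu(t)=\int_0^{+\infty}\widetilde{q}_s^1(y)\bP(L_\nu(t)\in dy)$, taking Laplace transforms with the help of \eqref{eq:Lapinvstab} and \eqref{eq:LapDC} so that each fractional equation collapses, under the integral sign, to the corresponding classical equation for $\widetilde{q}_s^1$, and closing the argument with an integration by parts.

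Finally, the first equation decouples from the second, so the Cauchy problem $D_t^\nu\widetilde{q}_s^\nu=-k\mu\,\widetilde{q}_s^\nu$, $\widetilde{q}_s^\nu(0)=1$, has solution $\widetilde{q}_s^\nu(t)=E_\nu(-k\mu t^\nu)$, whence
\begin{equation*}
F_{R^\nu}(t)=\widetilde{q}_{s+1}^\nu(t)=1-E_\nu(-k\mu t^\nu),\qquad t\ge 0,
\end{equation*}
which is exactly the distribution function of an $ML_\nu(k\mu)$ random variable. I expect the fractional part of the computation to be routine, a verbatim adaptation of the $h_n^\nu$ argument; the real work is the modelling step — arguing that during a busy period the phase process behaves like the two-state absorbed chain with rate $k\mu$ (in particular that the sojourn spanning a service completion is still exponential$(k\mu)$ and independent of the past, even though the $q_k^\nu$-equation carries the extra boundary terms $-k\mu p_{1,1}^\nu+\lambda p_0^\nu$), and that the semi-Markov structure, hence the very notion of ``inter-phase time'', survives the inverse-stable time-change.
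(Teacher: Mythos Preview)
Your proposal is correct and follows essentially the same approach as the paper: the paper states only that ``we can use the functions $q_s^\nu$ exactly in the same way we did with $h_n^\nu$,'' and your writeup is precisely that transcription, introducing a two-state absorbed process $\widetilde{S}^1$ with rate $k\mu$, time-changing it by $L_\nu$, deriving the fractional Cauchy problem via Laplace transforms and integration by parts, and solving to obtain $F_{R^\nu}(t)=1-E_\nu(-k\mu t^\nu)$. Your added remarks on the modelling step (why arrivals do not affect the phase sojourn and why the boundary terms in the $q_k^\nu$-equation are irrelevant for the absorbed chain) are helpful elaborations but do not change the argument.
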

Let us give some other notation. Let $T$ be a random variable with distribution function given by
{\small \begin{equation*}
F_T(t)=1-\sum_{n=0}^{k-1}\frac{(\lambda t^\nu)^n}{n!}E_\nu^{(n)}(-\lambda t^\nu)=1-\sum_{n=0}^{k-1}\sum_{j=1}^{+\infty}(-1)^j\binom{n+j}{n}\frac{(\lambda t^\nu)^{j+n}}{\Gamma(\nu(j+n)+1)}, \ t \ge 0;
\end{equation*}}
we call $T$ a {\em generalized Erlang random variable} (as introduced in \cite{MainardiGorenfloScalas2007}) with fractional index $\nu$, shape parameter $k$ and rate $\lambda$ and we denote it by $T \sim GE_\nu(k,\lambda)$. In particular $T\sim GE_\nu(k,\lambda)$ is sum of $k$ i.i.d. Mittag-Leffler random variable of fractional index $\nu$ and parameter $\lambda$. For this reason, denoting with $f_T$ the probability density function of $T$ and $\overline{f}_T$ its Laplace transform, we have, from \eqref{eq:MLdensLT},
\begin{equation*}
\overline{f}_T(v)=\frac{\lambda^k}{(\lambda+v^\nu)^k}, \ v>0.
\end{equation*}
From Theorem \ref{thm:interph} we easily obtain the following Corollary
\begin{cor}\label{cor:servtime}
The service times of a $\nu$-fractional $M/E_k/1$ queue are i.i.d. generalized Erlang random variables with fractional index $\nu$, shape parameter $k$ and rate $k\mu$.
\end{cor}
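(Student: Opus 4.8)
The plan is to read off the Corollary from Theorem~\ref{thm:interph} together with the very definition of a generalized Erlang random variable, so that no new computation is really needed. Recall that in the $M/E_k/1$ model the service of a single customer is, by construction, the concatenation of $k$ consecutive phases: the customer enters phase $1$ and, each time an inter-phase transition occurs, moves to the next phase, leaving the system precisely when the transition out of phase $k$ takes place. Hence the service time of a given customer is the sum of the $k$ inter-phase times corresponding to the phases $1,2,\dots,k$ traversed by that customer, and this decomposition is exactly the one the proof will exploit.

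By Theorem~\ref{thm:interph}, the inter-phase times of the $\nu$-fractional $M/E_k/1$ queue form an i.i.d. family of $ML_\nu(k\mu)$ random variables; in particular the $k$ inter-phase times making up one service are independent and each distributed as $ML_\nu(k\mu)$. Therefore the service time is a sum of $k$ i.i.d. Mittag-Leffler random variables of fractional index $\nu$ and parameter $k\mu$, which is precisely the definition of a $GE_\nu(k,k\mu)$ random variable given above (one may double-check this at the level of Laplace transforms: by \eqref{eq:MLdensLT} the density of such a sum has Laplace transform $(k\mu/(k\mu+v^\nu))^k$, the value recorded for $GE_\nu(k,k\mu)$). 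Finally, distinct customers are served over disjoint blocks of $k$ consecutive inter-phase times, so the i.i.d. property of the whole inter-phase family upgrades at once to the i.i.d. property of the service times.

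The one point that has to be handled with care — and the only place where the argument is more than bookkeeping — is the independence, within a single service, of the $k$ inter-phase durations (and of these from the instant at which the service starts). This is exactly what is being invoked when we quote ``i.i.d.'' from Theorem~\ref{thm:interph}, and it rests on the semi-Markov structure of $Q^\nu(t)=Q(L_\nu(t))$ used in the proof of Theorem~\ref{thm:interarr}: the embedded chain $Q^\nu(T_n)$ at the jump times $T_n$ is a time-homogeneous Markov chain with the same transition probabilities as that of $Q(t)$, so the passage times between successive phases can be obtained one transition at a time from the birth-type sub-dynamics governing $(q_s^\nu)_{1\le s\le k}$, in the same way $h_n^\nu$ was used for Theorem~\ref{thm:interph}. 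Once this is granted, the Corollary follows immediately from the two observations above.
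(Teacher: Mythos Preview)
Your proposal is correct and follows exactly the same route as the paper: the paper's proof is the single observation that a service time is the sum of $k$ inter-phase times, which by Theorem~\ref{thm:interph} are i.i.d.\ $ML_\nu(k\mu)$, hence the service time is $GE_\nu(k,k\mu)$ by definition. Your additional remarks (the Laplace transform check, the disjoint-blocks argument for i.i.d.\ across customers, and the semi-Markov justification) are sound elaborations but are not present in the paper's one-line proof.
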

\begin{proof}
Let us simply observe that a service time is sum of $k$ inter-phase times, which are i.i.d. Mittag-Leffler random variables of fractional index $\nu$ and parameter $k\mu$.
\end{proof}
From Theorems \ref{thm:interarr} and \ref{thm:interph} and Corollary \ref{cor:servtime} we can conclude that a fractional $M/E_k/1$ queue is a single-channel queue with Mittag-Leffler interarrival times and generalized Erlang service times in which the service is given through $k$ phases, each one with Mittag-Leffler distributed service time. In particular for $k=1$ we obtain the fractional $M/M/1$ queue introduced in \cite{CahoyPolitoPhoha2015}.\\
We can also obtain the distribution of a general sojourn time of $\cL^\nu(t)$ working with $\cP_n^\nu(t)$ as we already did with $q_s^\nu(t)$ and $h_n^\nu(t)$.
\begin{thm}
The sojourn time of the queue length process of a $\nu$-fractional $M/E_k/1$ queue in a state $\overline{n}>0$ is a Mittag-Leffler random variable with fractional index $\nu$ and parameter $\lambda+k\mu$.
\end{thm}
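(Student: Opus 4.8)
The plan is to repeat, almost verbatim, the argument used for Theorems \ref{thm:interarr} and \ref{thm:interph}, now applied to the queue-length chain $\cL^\nu(t)$ at a fixed state $\overline{n}>0$. First I would isolate the outflow rate from state $\overline{n}$ in the system \eqref{eq:fracsysql}: for every $\overline{n}\ge 1$ the equation for $D_t^\nu \cP_{\overline{n}}^\nu$ carries the coefficient $-(\lambda+k\mu)$ on $\cP_{\overline{n}}^\nu$, while the inflow terms $k\mu\,\cP_{\overline{n}+1}^\nu$ and $\lambda\sum_{m=1}^{\overline{n}}c_m\cP_{\overline{n}-m}^\nu$ do not involve $\cP_{\overline{n}}^\nu$ itself. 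Accordingly, introduce a modified classical process $\wcL^1(t)$ with state space $\{\overline{n},\partial\}$, where $\partial$ is an absorbing cemetery state, starting at $\overline{n}$ and leaving $\overline{n}$ at total rate $\lambda+k\mu$; equivalently $\widetilde{\cP}_{\overline{n}}^1(t):=\bP(\wcL^1(t)=\overline{n})$ solves the linear ODE $\der{\widetilde{\cP}_{\overline{n}}^1}{t}(t)=-(\lambda+k\mu)\widetilde{\cP}_{\overline{n}}^1(t)$ with $\widetilde{\cP}_{\overline{n}}^1(0)=1$, and $1-\widetilde{\cP}_{\overline{n}}^1(t)$ is the distribution function of the sojourn time of $\cL^1$ in $\overline{n}$ started at $\overline{n}$.

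Second, set $\wcL^\nu(t):=\wcL^1(L_\nu(t))$. Since $\cL^1$ is a Markov process, $\cL^\nu$ is semi-Markov and its embedded jump chain is a time-homogeneous Markov chain (as recalled, e.g., in \cite{GihmanSkorohod1975} and already exploited in the proof of Theorem \ref{thm:interarr}); hence the sojourn time $\ttau$ of $\cL^\nu$ in $\overline{n}$ is well-defined and coincides with the absorption time of $\wcL^\nu$, so that $F_{\ttau}(t)=\bP(\wcL^\nu(t)=\partial)=1-\widetilde{\cP}_{\overline{n}}^\nu(t)$ with $\widetilde{\cP}_{\overline{n}}^\nu(t):=\bP(\wcL^\nu(t)=\overline{n})$.

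Third, I would show that $\widetilde{\cP}_{\overline{n}}^\nu$ solves the fractional relaxation equation $D_t^\nu \widetilde{\cP}_{\overline{n}}^\nu=-(\lambda+k\mu)\widetilde{\cP}_{\overline{n}}^\nu$ with $\widetilde{\cP}_{\overline{n}}^\nu(0)=1$. This is exactly as in Theorem \ref{thm:interarr}: from the subordination identity $\widetilde{\cP}_{\overline{n}}^\nu(t)=\int_0^{+\infty}\widetilde{\cP}_{\overline{n}}^1(y)\bP(L_\nu(t)\in dy)$ and \eqref{eq:Lapinvstab} one gets, for the Laplace transform, $\overline{\widetilde{\cP}}_{\overline{n}}^\nu(v)=v^{\nu-1}\int_0^{+\infty}\widetilde{\cP}_{\overline{n}}^1(y)e^{-yv^\nu}\,dy$; substituting this into the Laplace form \eqref{eq:LapDC} of the claimed equation and using the classical ODE for $\widetilde{\cP}_{\overline{n}}^1$ together with integration by parts reduces the equation to an identity. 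Invoking the standard fact (\cite{KilbasSrivastavaTrujillo2006}) that the unique solution of this Cauchy problem is $\widetilde{\cP}_{\overline{n}}^\nu(t)=E_\nu(-(\lambda+k\mu)t^\nu)$, one concludes $F_{\ttau}(t)=1-E_\nu(-(\lambda+k\mu)t^\nu)$, i.e. $\ttau\sim ML_\nu(\lambda+k\mu)$.

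The only genuinely delicate point I would spell out is the reduction of the sojourn-time law to the two-state absorbing problem: one must argue that, conditionally on currently being in $\overline{n}$, the holding time there depends only on the exit mechanism at $\overline{n}$ and not on the post-exit dynamics, so that replacing the latter by absorption does not alter the law of $\ttau$. This is precisely where the semi-Markov structure of the time-changed chain is used; everything else is the routine Laplace-transform and integration-by-parts bookkeeping already carried out in the proofs of Theorems \ref{thm:interarr} and \ref{thm:interph}.
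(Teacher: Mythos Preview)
Your proposal is correct and follows essentially the same approach as the paper, which does not spell out the proof but explicitly states that one obtains the result ``working with $\cP_n^\nu(t)$ as we already did with $q_s^\nu(t)$ and $h_n^\nu(t)$,'' i.e.\ by repeating the argument of Theorems~\ref{thm:interarr} and~\ref{thm:interph}. Your use of a cemetery state $\partial$ in place of a specific absorbing state is a cosmetic variant of the same construction, and your identification of the semi-Markov reduction as the only nontrivial step is exactly the point the paper handles in the proof of Theorem~\ref{thm:interarr}.
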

\begin{rmk}
The sojourn time of $\cL^\nu(t)$ in the state $0$ coincides with an interarrival time, so it is a Mittag-Leffler random variable of fractional index $\nu$ and parameter $\lambda$.
\end{rmk}
From this information on sojourn time, we can also obtain a new information on the mutual dependence of interarrival times and inter-phase times.
\begin{cor}
Interarrival times and inter-phase times are not independent for $\nu \in (0,1)$.
\end{cor}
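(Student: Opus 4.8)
The plan is to argue by contradiction, using the fact that a sojourn time of $\cL^\nu$ in a positive state can be written as the minimum of one interarrival time and one inter-phase time, and playing this against the three marginal laws already identified.

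First I would fix $\nu\in(0,1)$ and isolate a concrete pair of competing clocks. Consider the first visit of $\cL^\nu(t)$ to the state $1$ (that is, $N^\nu=1$, $S^\nu=1$), which begins at the instant the first customer arrives and service starts. From that instant two clocks run concurrently: the one measuring the \emph{second} interarrival time $A$, which by Theorem~\ref{thm:interarr} is $ML_\nu(\lambda)$-distributed, and the one measuring the \emph{first} inter-phase time $P$, which by Theorem~\ref{thm:interph} is $ML_\nu(k\mu)$-distributed. The process leaves state $1$ exactly when one of these clocks rings, so the first-visit sojourn time of $\cL^\nu$ in state $1$ equals $\min(A,P)$. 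Since $\cL^\nu$ is semi-Markov, its sojourn times in a fixed state are identically distributed, so the sojourn-time theorem above yields $\min(A,P)\sim ML_\nu(\lambda+k\mu)$.

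Suppose now, towards a contradiction, that interarrival times and inter-phase times were independent; then $A$ and $P$ are independent, and comparing the two expressions for $\bP(\min(A,P)>t)$ --- one from independence, one from the sojourn-time theorem --- gives, for every $t\ge 0$,
\begin{equation*}
E_\nu\bigl(-(\lambda+k\mu)t^\nu\bigr)=E_\nu(-\lambda t^\nu)\,E_\nu(-k\mu t^\nu).
\end{equation*}
Expanding both sides as power series in $t^\nu$ by means of $E_\nu(-\theta t^\nu)=\sum_{m\ge 0}(-\theta)^m t^{\nu m}/\Gamma(\nu m+1)$ and equating the coefficients of $t^{2\nu}$ gives, after cancelling the nonzero factor $\lambda k\mu$, the identity $\Gamma(2\nu+1)=2\,\Gamma(\nu+1)^2$. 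It remains to rule this out on $(0,1)$: the function $\beta(\nu):=\Gamma(2\nu+1)/\Gamma(\nu+1)^2$ has $\beta(1)=2$ and $\frac{d}{d\nu}\log\beta(\nu)=2\bigl(\psi(2\nu+1)-\psi(\nu+1)\bigr)>0$ for $\nu>0$ by strict monotonicity of the digamma function $\psi$, hence $\beta$ is strictly increasing and $\beta(\nu)<2$ for $\nu\in(0,1)$; this contradicts $\Gamma(2\nu+1)=2\,\Gamma(\nu+1)^2$. (For $\nu=1$ one has $\Gamma(3)=2\,\Gamma(2)^2$, which is precisely why these two families of times are independent in the classical $M/E_k/1$ queue.)

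The step I expect to be the main obstacle is the structural claim in the second paragraph: that the first-visit sojourn time in state $1$ is genuinely the minimum of a \emph{bona fide} interarrival time and a \emph{bona fide} inter-phase time, i.e.\ of the ``fresh'' copies whose laws are supplied by Theorems~\ref{thm:interarr} and~\ref{thm:interph} rather than residual variants. I would handle this exactly as in the proof of Theorem~\ref{thm:interarr}: $\cL^\nu(t)=\cL^1(L_\nu(t))$ is semi-Markov with a time-homogeneous embedded chain, and on entering state $1$ through an arrival (the queue having been empty) both the time to the next arrival and the time to the first phase completion restart from scratch, so they coincide with the second interarrival time and the first inter-phase time. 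The remaining items --- the coefficient comparison and the monotonicity of $\beta$ --- are routine.
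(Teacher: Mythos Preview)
Your argument follows the paper's: identify the sojourn time in a positive state as the minimum of an interarrival time and an inter-phase time, so that independence would force $E_\nu(-(\lambda+k\mu)t^\nu)=E_\nu(-\lambda t^\nu)E_\nu(-k\mu t^\nu)$, which fails for $\nu\in(0,1)$. The paper disposes of this last step by citation, whereas you give a self-contained proof via the $t^{2\nu}$ coefficient and the monotonicity of $\beta(\nu)=\Gamma(2\nu+1)/\Gamma(\nu+1)^2$; you are also more explicit than the paper about isolating an instance where both clocks are genuinely fresh.

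One slip to fix: for $k>1$ the state $\cL^\nu=1$ is $(N^\nu,S^\nu)=(1,1)$, the \emph{last} phase of the first customer's service, reached only after $k-1$ phase completions; at that moment the arrival clock has been running since the first arrival and is residual, not fresh, so the sojourn time there is not the minimum of the variables supplied by Theorems~\ref{thm:interarr} and~\ref{thm:interph}. The state you want is $\cL^\nu=k$, i.e.\ $(N^\nu,S^\nu)=(1,k)$, which is where the queue lands when the first customer arrives to an empty system (the forward equation for $p_{1,k}$ receives mass from state $0$ at rate $\lambda$). At that instant both the second interarrival time and the first inter-phase time start from scratch, and the rest of your argument goes through unchanged.
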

\begin{proof}
Let us first remark that if two random variables $T$ and $S$ are independent, then
\begin{equation*}
\bP(\min\{T,S\}\ge t)=\bP(T \ge t, S \ge t)=\bP(T \ge t)\bP(S \ge t).
\end{equation*}
Let $T_S$ be a sojourn time, $T_A$ an interarrival time and $T_P$ an inter-phase time. Thus we have $T_S=\min\{T_A,T_P\}$. However
{\small \begin{align*}
\bP(T_S\ge t)=E_\nu(-(\lambda+k\mu)t^\nu), &&
\bP(T_P\ge t)=E_\nu(-k\mu t^\nu), &&
\bP(T_A\ge t)=E_\nu(-\lambda t^\nu)
\end{align*}}
thus we have that $\bP(T_S \ge t)=\bP(T_P \ge t)\bP(T_A \ge t)$ if and only if
\begin{equation*}
E_\nu(-(\lambda+k\mu)t^\nu)=E_\nu(-k\mu t^\nu)E_\nu(-\lambda t^\nu)
\end{equation*}
that is true if and only if $\lambda=k\mu=0$ or $\nu=1$ (see \cite{KexuePeng2010}).
\end{proof}
\section{Transient state probabilities}\label{sec5}
Our aim is now to obtain the transient state probability functions $p_0^\nu(t)$ and $p_{n,s}^\nu(t)$ in a closed form, with the aid of the three-parameter Mittag-Leffler function $E_{\nu,\mu}^\delta$ (see \cite{KilbasSrivastavaTrujillo2006}) defined as
\begin{equation}\label{eq:ML3}
E_{\nu,\mu}^{\delta}(z)=\sum_{k=0}^{+\infty}\frac{\Gamma(\delta+k)z^k}{k!\Gamma(\delta)\Gamma(\nu k+\mu)}, \ \Re(\gamma),\Re(\nu),\Re(\delta)>0, \ z \in \C
\end{equation}
and the Laplace transform formula (see \cite{HauboldMathaiSaxena2011})
\begin{multline}\label{eq:ML3LT}
\cL[z^{\gamma-1}E_{\nu,\gamma}^\delta(wz^\nu)](v)=\frac{v^{\nu,\delta-\gamma}}{(v^\nu-w)^{\delta}}, \\ \gamma,\nu,\delta,w \in \C, \ \Re(\gamma),\Re(\nu),\Re(\delta)>0, \ v \in \C, |wv^\nu|<1.
\end{multline}
To do this, let us first work with $p_0^\nu(t)$.
\begin{thm}
Let $p_0^\nu(t)$ be the $0$ state probability of a fractional $M/E_k/1$ queue $Q^\nu(t)$ and $\pi_0^\nu(v)$ its Laplace transform. Then we have
\begin{equation}\label{eq:p0nu}
p_0^\nu(t)=\sum_{m=1}^{+\infty}\sum_{r=0}^{+\infty}C^0_{m,r}t^{\gamma^0_{m,r}-1}E_{\nu,\gamma^0_{m,r}}^{\delta^0_{m,r}}(-(\lambda+k\mu)t^\nu)
\end{equation}
and
\begin{equation}\label{eq:pi0nu}
\pi_0^\nu(v)=\sum_{m=1}^{+\infty}\sum_{r=0}^{+\infty}C^0_{m,r}\frac{v^{\nu\delta^0_{m,r}-\gamma^0_{m,r}}}{(\lambda+k\mu+v^\nu)^{\delta^0_{m,r}}}
\end{equation}
where
\begin{equation}\label{eq:C0mr}
\begin{gathered}
C^0_{m,r}=\frac{m}{m+r(k+1)}\binom{m+r(k+1)}{m+rk}\lambda^r(k\mu)^{m+rk-1},\\
\delta^0_{m,r}=m+r(k+1),  \qquad \gamma^0_{m,r}=\nu(\delta^0_{m,r}-1)+1.
\end{gathered}
\end{equation}
\end{thm}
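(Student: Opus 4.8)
The idea is to decompose $p_0^1=p_0$ into a \emph{nonnegative} series of elementary functions, subordinate each of them separately, and then recognise the outcome through the Laplace transform formula \eqref{eq:ML3LT}. Put $\delta^0_{m,r}=m+r(k+1)$ and
\[
u^1_{m,r}(t)=\frac{t^{\delta^0_{m,r}-1}}{\Gamma(\delta^0_{m,r})}\,e^{-(\lambda+k\mu)t}\ge 0.
\]
Since $\delta^0_{m,r}-(m+rk)=r$, the binomial coefficient in \eqref{eq:C0mr} equals $\binom{m+r(k+1)}{m+rk}=\Gamma(\delta^0_{m,r}+1)/\big(r!\,\Gamma(m+rk+1)\big)$, and a one-line computation shows that the generic coefficient $\frac{m\,\lambda^r(k\mu)^{m+rk-1}}{r!\,\Gamma(m+rk+1)}$ of \eqref{eq:p01} (reading $\Gamma(m+rk+1)$ for the misprinted $\Gamma(n+rk+1)$) equals $C^0_{m,r}/\Gamma(\delta^0_{m,r})$. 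Hence \eqref{eq:p01} reads $p_0^1(t)=\sum_{m\ge 1}\sum_{r\ge 0}C^0_{m,r}\,u^1_{m,r}(t)$.

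Next I would subordinate term by term. Set $u^\nu_{m,r}(t)=\int_0^{+\infty}u^1_{m,r}(y)\,\bP(L_\nu(t)\in dy)\ge 0$. Since all summands are nonnegative, Tonelli's theorem applied to \eqref{eq:p0} yields $p_0^\nu(t)=\sum_{m\ge 1}\sum_{r\ge 0}C^0_{m,r}\,u^\nu_{m,r}(t)$. The elementary Laplace transform is $\overline{u^1_{m,r}}(v)=(v+\lambda+k\mu)^{-\delta^0_{m,r}}$, so by \eqref{eq:Lapinvstab}
\[
\overline{u^\nu_{m,r}}(v)=v^{\nu-1}\,\overline{u^1_{m,r}}(v^\nu)=\frac{v^{\nu-1}}{(v^\nu+\lambda+k\mu)^{\delta^0_{m,r}}}=\frac{v^{\nu\delta^0_{m,r}-\gamma^0_{m,r}}}{(v^\nu+\lambda+k\mu)^{\delta^0_{m,r}}},
\]
the last step because $\gamma^0_{m,r}=\nu(\delta^0_{m,r}-1)+1$ forces $\nu\delta^0_{m,r}-\gamma^0_{m,r}=\nu-1$. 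By \eqref{eq:ML3LT} (with $w=-(\lambda+k\mu)$, $\gamma=\gamma^0_{m,r}$, $\delta=\delta^0_{m,r}$) the right-hand side is the Laplace transform of $t^{\gamma^0_{m,r}-1}E_{\nu,\gamma^0_{m,r}}^{\delta^0_{m,r}}(-(\lambda+k\mu)t^\nu)$, so by uniqueness of the Laplace transform (both functions being continuous) $u^\nu_{m,r}(t)=t^{\gamma^0_{m,r}-1}E_{\nu,\gamma^0_{m,r}}^{\delta^0_{m,r}}(-(\lambda+k\mu)t^\nu)$. Substituting into $p_0^\nu=\sum C^0_{m,r}u^\nu_{m,r}$ gives \eqref{eq:p0nu}; taking the Laplace transform of that identity — the sum/integral interchange being legitimate because $\pi_0^\nu(v)$ is finite and all $C^0_{m,r}\overline{u^\nu_{m,r}}(v)$ are nonnegative — gives \eqref{eq:pi0nu}.

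The two points requiring care are the coefficient bookkeeping — checking that the $\Gamma(\delta^0_{m,r})$ produced by integrating $t^{\delta^0_{m,r}-1}e^{-(\lambda+k\mu)t}$ is exactly the factor turning $m/\big(r!\,\Gamma(m+rk+1)\big)$ into $\tfrac{m}{\delta^0_{m,r}}\binom{m+r(k+1)}{m+rk}$ — and the domain restriction attached to \eqref{eq:ML3LT}, which strictly applies only for $|v|$ large; one therefore establishes the Laplace transform identities on that half-plane and extends them to all $v>0$ by analytic continuation. I expect only the latter to be genuinely delicate: the nonnegativity of the decomposition $p_0^1=\sum C^0_{m,r}u^1_{m,r}$ makes every Fubini/Tonelli interchange — over the occupation measure of $L_\nu$ and over the Laplace integral — automatic, so no quantitative estimate on the three-parameter Mittag-Leffler function is needed.
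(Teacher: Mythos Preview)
Your proposal is correct and follows essentially the same route as the paper: both start from the power-series representation \eqref{eq:p01} of $p_0^1$, pass through the relation $\pi_0^\nu(v)=v^{\nu-1}\int_0^\infty p_0^1(y)e^{-yv^\nu}dy$ from \eqref{eq:pi0}, evaluate the resulting Gamma integrals, and then invoke \eqref{eq:ML3LT} to recognise the three-parameter Mittag-Leffler functions. The only differences are organisational --- you subordinate each elementary term $u^1_{m,r}$ individually and are explicit about Tonelli and the analytic-continuation caveat on \eqref{eq:ML3LT}, whereas the paper manipulates the full series at once without commenting on these points.
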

\begin{proof}
Let us recall that we already have a closed form for $p_0^1(t)$, given by Eq. \eqref{eq:p01}. Moreover, using this closed form in Eq. \eqref{eq:pi0} we have:
\begin{equation*}
\pi_0^\nu(v)=\sum_{m=1}^{+\infty}\sum_{r=0}^{+\infty}\frac{m\lambda^r(k\mu)^{m+rk-1}}{r!\Gamma(m+rk+1)}v^{\nu-1}\int_0^{+\infty}y^{m+r(k+1)-1}e^{-(\lambda+k\mu+v^\nu)y}dy
\end{equation*}
that, integrating, becomes
\begin{equation*}
\pi_0^\nu(v)=\sum_{m=1}^{+\infty}\sum_{r=0}^{+\infty}\frac{m\lambda^r(k\mu)^{m+rk-1}(m+r(k+1)-1)!}{r!\Gamma(m+rk+1)}\frac{v^{\nu-1}}{(\lambda+k\mu+v^\nu)^{m+r(k+1)}}.
\end{equation*}
Now let us pose $C^0_{m,r}$, $\delta^0_{m,r}$ and $\gamma^0_{m,r}$ as in \eqref{eq:C0mr} and observe that
\begin{equation*}
\pi_0^\nu(v)=\sum_{m=1}^{+\infty}\sum_{r=0}^{+\infty}C^0_{m,r}\frac{v^{\nu\delta^0_{m,r}-\gamma^0_{m,r}}}{(\lambda+k\mu+v^\nu)^{\delta^0_{m,r}}}.
\end{equation*}
Finally, by using Eq. \eqref{eq:ML3LT} we obtain:
\begin{equation*}
p_0^\nu(t)=\sum_{r=0}^{+\infty}\sum_{m=1}^{+\infty}C^0_{m,r}t^{\gamma^0_{m,r}-1}E_{\nu,\gamma^0_{m,r}}^{\delta^0_{m,r}}(-(\lambda+k\mu)t^\nu).
\end{equation*}
\end{proof}
\begin{rmk}
For $k=1$ we have that:
\begin{equation*}
\begin{gathered}
C^0_{m,r}=\frac{m}{m+2r}\binom{m+2r}{m+r}\lambda^r\mu^{m+r-1},\\
\delta^0_{m,r}=m+2r, \qquad
\gamma^0_{m,r}=\nu(m+2r-1)+1
\end{gathered}
\end{equation*}
and then we have
\begin{equation*}
p_0^\nu(t)=\sum_{r=0}^{+\infty}\frac{m}{m+2r}\binom{m+2r}{m+r}\lambda^r\mu^{m+r-1}t^{\nu(m+2r-1)}E^{m+2r}_{\nu,\nu(m+2r-1)+1}(-(\lambda+\mu)t^\nu).
\end{equation*}
Now, let us change variables, posing $\overline{m}=m+r$, so we have
\begin{equation*}
p_0^\nu(t)=\sum_{r=0}^{+\infty}\sum_{\overline{m}=r+1}^{+\infty}\frac{\overline{m}-r}{\overline{m}+r}\binom{\overline{m}+r}{\overline{m}}\lambda^r\mu^{\overline{m}-1}t^{\nu(\overline{m}+r-1)}E^{\overline{m}+r}_{\nu,\nu(\overline{m}+r-1)+1}(-(\lambda+\mu)t^\nu).
\end{equation*}
This is the second formula obtained in Remark $4$ in \cite{AscioneLeonenkoPirozzi2018} for the fractional $M/M/1$ queue. 
\end{rmk}
\begin{rmk}
For $\nu=1$ we obtain
\begin{equation*}
\gamma^0_{m,r}=\delta^0_{m,r}=m+r(k+1)
\end{equation*}
and then, with some calculations
\begin{align*}
\begin{split}
p_0^1(t)&=
\sum_{m=1}^{+\infty}\sum_{r=0}^{+\infty}\frac{C^0_{m,r}}{\Gamma(\delta^0_{m,r})}t^{\delta^0_{m,r}-1}e^{-(\lambda+k\mu)t}.
\end{split}
\end{align*}
Let us remark that
\begin{align*}
\begin{split}
\frac{C^0_{m,r}}{\Gamma(\delta^0_{m,r})}&=\frac{m\lambda^r(k\mu)^{m+rk-1}(m+r(k+1)-1)!}{r!\Gamma(m+rk+1)\Gamma(m+r(k+1))}
=\frac{m\lambda^r(k\mu)^{m+rk-1}}{r!\Gamma(m+rk+1)}
\end{split}
\end{align*}
so that
\begin{equation*}
p_0^1(t)=\sum_{r=0}^{+\infty}\frac{m\lambda^r(k\mu)^{m+rk-1}}{r!\Gamma(m+rk+1)}t^{m+r(k+1)-1}e^{-(\lambda+k\mu)t}
\end{equation*}
that is Eq. \eqref{eq:p01}.
\end{rmk}
To work with $p_{n,s}^\nu(t)$, it could be useful to show the following Lemma.
\begin{lem}
For any $n \in \N$ we have
\begin{align}\label{eq:teclemma}
\begin{split}
\int_0^{+\infty}&\int_0^{y}p_0^1(z)(y-z)^ne^{-(\lambda+k\mu)(y-z)}e^{-yv^\nu}dzdy\\&=\sum_{m=1}^{+\infty}\sum_{r=0}^{+\infty}C^0_{m,r}\frac{n!}{(\lambda+k\mu+v^\nu)^{m+r(k+1)+n+1}}
\end{split}
\end{align}
where $C^0_{m,r}$ is defined in \eqref{eq:C0mr}.
\end{lem}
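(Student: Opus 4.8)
The plan is to expand $p_0^1$ as the explicit power series \eqref{eq:p01} and integrate term by term. Conceptually, the left-hand side is just the Laplace transform, evaluated at $v^\nu$, of the convolution of $p_0^1(t)$ with $g_n(t):=t^n e^{-(\lambda+k\mu)t}$; since $\cL[g_n](s)=n!/(s+\lambda+k\mu)^{n+1}$ and, as already computed in the proof of the previous theorem, $\int_0^{+\infty}p_0^1(y)e^{-yv^\nu}\,dy=v^{1-\nu}\pi_0^\nu(v)=\sum_{m,r}C^0_{m,r}(\lambda+k\mu+v^\nu)^{-\delta^0_{m,r}}$ (here one uses $\nu\delta^0_{m,r}-\gamma^0_{m,r}=\nu-1$), the convolution theorem for Laplace transforms yields exactly the claimed identity with $\delta^0_{m,r}=m+r(k+1)$. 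Below I carry this out directly, which also makes the interchanges explicit.

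First I would substitute \eqref{eq:p01} into the double integral. Since every summand is nonnegative and $p_0^1(z)\le 1$, the integrand is dominated by $(y-z)^n e^{-(\lambda+k\mu)(y-z)}e^{-yv^\nu}$, whose double integral over $\{0<z<y\}$ is finite for $v>0$; hence Tonelli's theorem permits exchanging the sum over $m\ge1,\ r\ge0$ with both integrals. Writing $a:=m+r(k+1)$ and noting that the factor $z^{a-1}e^{-(\lambda+k\mu)z}$ coming from \eqref{eq:p01} combines with $e^{-(\lambda+k\mu)(y-z)}$ to give $e^{-(\lambda+k\mu)y}$, the problem reduces to evaluating
\begin{equation*}
\int_0^{+\infty}\int_0^{y} z^{a-1}(y-z)^n\,e^{-(\lambda+k\mu)y}e^{-yv^\nu}\,dz\,dy
\end{equation*}
for each fixed $m,r$, and multiplying the result by the coefficient $\dfrac{m\lambda^r(k\mu)^{m+rk-1}}{r!\,\Gamma(m+rk+1)}$.

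The inner integral is an Euler Beta integral: $\int_0^y z^{a-1}(y-z)^n\,dz=\dfrac{\Gamma(a)\,n!}{\Gamma(a+n+1)}\,y^{a+n}$, and then the outer integral is a Gamma integral, $\int_0^{+\infty}y^{a+n}e^{-(\lambda+k\mu+v^\nu)y}\,dy=\dfrac{\Gamma(a+n+1)}{(\lambda+k\mu+v^\nu)^{a+n+1}}$. The factors $\Gamma(a+n+1)$ cancel, leaving $\Gamma(a)\,n!/(\lambda+k\mu+v^\nu)^{a+n+1}$. Multiplying by the series coefficient and using $\Gamma(a)=(m+r(k+1)-1)!$ together with $\binom{m+r(k+1)}{m+rk}=\dfrac{(m+r(k+1))!}{(m+rk)!\,r!}$ reproduces precisely $C^0_{m,r}$ as in \eqref{eq:C0mr}; since $a+n+1=m+r(k+1)+n+1$, summing over $m$ and $r$ gives the right-hand side. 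The only step requiring care is the interchange of the double series with the two integrals, which is handled by Tonelli as above using nonnegativity of $p_0^1$ and of $t^n e^{-(\lambda+k\mu)t}$ and the finiteness of the bounding integral; no genuine obstacle arises.
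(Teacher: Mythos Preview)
Your argument is correct. The conceptual outline in your first paragraph is in fact precisely the paper's proof: the paper applies Fubini and the substitution $w=y-z$ to factor the double integral as $\bigl(\int_0^{+\infty}p_0^1(z)e^{-zv^\nu}\,dz\bigr)\bigl(\int_0^{+\infty}w^n e^{-(\lambda+k\mu+v^\nu)w}\,dw\bigr)$, then quotes $\pi_0^\nu(v)/v^{\nu-1}$ from \eqref{eq:pi0} and \eqref{eq:pi0nu} for the first factor and the Gamma integral for the second---exactly your ``Laplace transform of a convolution'' remark. Your second, explicit route via the term-by-term expansion of \eqref{eq:p01} followed by a Beta integral in $z$ and a Gamma integral in $y$ is a mild variant: it avoids citing \eqref{eq:pi0nu} and makes the interchange of sums and integrals fully explicit, at the cost of recomputing $C^0_{m,r}$ from scratch. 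Either path is fine; the paper's factorization is a bit shorter because it reuses the previous theorem.
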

\begin{proof}
Let us first notice that:
\begin{multline*}
\int_0^{+\infty}\int_0^{y}p_0^1(z)(y-z)^ne^{-(\lambda+k\mu)(y-z)}e^{-yv^\nu}dzdy=\\=\int_0^{+\infty}\int_0^{+\infty}p_0^1(z)(y-z)^ne^{-(\lambda+k\mu+v^\nu)(y-z)}e^{-zv^\nu}\chi_{[0,y]}(z)dzdy
\end{multline*}
and then we can use Fubini's theorem to obtain
\begin{multline*}
\int_0^{+\infty}\int_0^{+\infty}p_0^1(z)(y-z)^ne^{-(\lambda+k\mu+v^\nu)(y-z)}e^{-zv^\nu}\chi_{[0,y]}(z)dzdy=\\=\int_0^{+\infty}\int_0^{+\infty}p_0^1(z)(y-z)^ne^{-(\lambda+k\mu+v^\nu)(y-z)}e^{-zv^\nu}\chi_{[0,y]}(z)dydz.
\end{multline*}
Now let us pose $w=y-z$ to obtain
\begin{multline*}
\int_0^{+\infty}\int_0^{+\infty}p_0^1(z)(y-z)^ne^{-(\lambda+k\mu+v^\nu)(y-z)}e^{-zv^\nu}\chi_{[0,y]}(z)dydz\\=\int_0^{+\infty}\int_{-z}^{+\infty}p_0^1(z)w^ne^{-(\lambda+k\mu+v^\nu)w}e^{-zv^\nu}\chi_{[0,z+w]}(z)dwdz
\end{multline*}
that, since $\chi_{[0,z+w]}(z)=\chi_{[0,+\infty)}(w)$, gives us
\begin{multline*}
\int_0^{+\infty}\int_{-z}^{+\infty}p_0^1(z)w^ne^{-(\lambda+k\mu+v^\nu)w}e^{-zv^\nu}\chi_{[0,z+w]}(z)dwdz=\\=\left(\int_0^{+\infty}p_0^1(z)e^{-zv^\nu}dz\right)\left(\int_{0}^{+\infty}w^ne^{-(\lambda+k\mu+v^\nu)w}dw\right).
\end{multline*}
By integrating, we have
\begin{equation*}
\int_{0}^{+\infty}w^ne^{-(\lambda+k\mu+v^\nu)w}dw=\frac{n!}{(\lambda+k\mu+v^\nu)^{n+1}}.
\end{equation*}
Moreover, by \eqref{eq:pi0} and \eqref{eq:pi0nu} we have
\begin{align*}
\begin{split}
\int_0^{+\infty}&p_0^1(z)e^{-zv^\nu}dz=\frac{\pi_0^\nu(v)}{v^{\nu-1}}
=\sum_{m=1}^{+\infty}\sum_{r=0}^{+\infty}\frac{C^0_{m,r}}
{(\lambda+k\mu+v^\nu)^{m+r(k+1)}}
\end{split}
\end{align*}
so finally we obtain
\begin{align*}
\begin{split}
\int_0^{+\infty}&\int_0^{y}p_0^1(z)(y-z)^ne^{-(\lambda+k\mu)(y-z)}e^{-yv^\nu}dzdy\\&=\sum_{m=1}^{+\infty}\sum_{r=0}^{+\infty}C^0_{m,r}\frac{n!}{(\lambda+k\mu+v^\nu)^{m+r(k+1)+n+1}}.
\end{split}
\end{align*}
\end{proof}
Now we are ready to show the following Theorem.
\begin{thm}
For any $(n,s)\in \cS^*$ let $p_{n,s}^\nu(t)=\bP(Q^\nu(t)=(n,s)|Q^\nu(0)=(0,0))$ and $\pi_{n,s}^\nu(v)$ be its Laplace transform. Then
\begin{align*}
\begin{split}
p_{n,s}^\nu(t)&=\sum_{j=0}^{+\infty}A^{n,s}_jt^{\alpha_j^{n,s}-1}E^{a^{n,s}_j}_{\nu,\alpha_j^{n,s}}(-(\lambda+k\mu)t^\nu)\\ &+\sum_{j=0}^{+\infty}\sum_{m=1}^{+\infty}\sum_{r=0}^{+\infty}B^{n,s}_{j,m,r}t^{\beta_{j,m,r}^{n,s}-1}E^{b^{n,s}_{j,m,r}}_{\nu,\beta_{j,m,r}^{n,s}}(-(\lambda+k\mu)t^\nu)\\ &-\sum_{j=0}^{+\infty}\sum_{m=1}^{+\infty}\sum_{r=0}^{+\infty}C^{n,s}_{j,m,r}t^{\gamma_{j,m,r}^{n,s}-1}E^{c^{n,s}_{j,m,r}}_{\nu,\gamma_{j,m,r}^{n,s}}(-(\lambda+k\mu)t^\nu)
\end{split}
\end{align*}
and
\begin{align*}
\begin{split}
\pi_{n,s}^\nu(v)&=\sum_{j=0}^{+\infty}A^{n,s}_j\frac{v^{\nu a^{n,s}_j-\alpha^{n,s}_j}}{(\lambda+k\mu+v^\nu)^{a^{n,s}_j}} +\sum_{j=0}^{+\infty}\sum_{m=1}^{+\infty}\sum_{r=0}^{+\infty}B^{n,s}_{j,m,r}\frac{v^{\nu b^{n,s}_{j,m,r}-\beta^{n,s}_{j,m,r}}}{(\lambda+k\mu+v^\nu)^{b^{n,s}_{j,m,r}}}\\ &-\sum_{j=0}^{+\infty}\sum_{m=1}^{+\infty}\sum_{r=0}^{+\infty}C^{n,s}_{j,m,r}\frac{v^{\nu c^{n,s}_{j,m,r}-\gamma^{n,s}_{j,m,r}}}{(\lambda+k\mu+v^\nu)^{c^{n,s}_{j,m,r}}}
\end{split}
\end{align*}
where
{\small \begin{equation}\label{eq:const}
\begin{gathered}
A^{n,s}_{j}=\binom{n+k+kj+j-s}{n+j}\lambda^{n+j}(k\mu)^{k(j+1)-s}\\
a^{n,s}_j=n-s+(j+1)(k+1) \qquad \alpha^{n,s}_j=\nu(a^{n,s}_j-1)+1\\
B^{n,s}_{j,m,r}=k\mu C^0_{m,r}A^{n,s}_j \qquad b^{n,s}_{j,m,r}=\delta^0_{m,r}+a^{n,s}_j \qquad \beta^{n,s}_{j,m,r}=\nu(b^{n,s}_{j,m,r}-1)+1\\
C^{n,s}_{j,m,r}=\begin{cases}
k\mu C^0_{m,r}A^{n,s+1}_j & s\not = k\\
k\mu C^0_{m,r}A^{n+1,1}_j & s= k
\end{cases}
\qquad c^{n,s}_{j,m,r}=\begin{cases}\delta^0_{m,r}+a^{n,s+1}_j & s \not = k\\
\delta^0_{m,r}+a^{n+1,1}_j & s=k
\end{cases}\\
\gamma^{n,s}_{j,m,r}=\nu(c^{n,s}_{j,m,r}-1)+1.
\end{gathered}
\end{equation}}
\end{thm}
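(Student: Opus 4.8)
The plan is to mimic the proof just given for $p_0^\nu(t)$: start from the explicit classical expressions \eqref{eq:pns1} (for $1\le s\le k-1$) and \eqref{eq:pnk1} (for $s=k$), pass to Laplace transforms through the subordination identity, and then invert term by term with the three-parameter Mittag-Leffler pair. First I would recall, exactly as in \eqref{eq:pnsnuint} and \eqref{eq:Lapinvstab}, that
\begin{equation*}
\pi_{n,s}^\nu(v)=v^{\nu-1}\int_0^{+\infty}p_{n,s}^1(y)e^{-yv^\nu}\,dy,
\end{equation*}
so the whole computation reduces to integrating $p_{n,s}^1(y)$ against $e^{-yv^\nu}$. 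Now plug in \eqref{eq:pns1} (or \eqref{eq:pnk1}): in both cases $p_{n,s}^1$ is a sum of three pieces, namely a pure power-times-exponential term, and two terms that are convolutions of $p_0^1$ against $(y-z)^{\bullet}e^{-(\lambda+k\mu)(y-z)}$. The $s=k$ case has the same structure, the only difference being that the index shift in the third convolution is $(n,s)\mapsto(n+1,1)$ rather than $s\mapsto s+1$.

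Next I would integrate each of the three pieces. For the pure power term, $\int_0^{+\infty}y^{a}e^{-(\lambda+k\mu+v^\nu)y}\,dy=a!\,(\lambda+k\mu+v^\nu)^{-a-1}$; matching $a+1$ with $a_j^{n,s}=n-s+(j+1)(k+1)$ and absorbing the factorial into the binomial coefficient via the identity $\binom{n+k+kj+j-s}{n+j}=(a_j^{n,s}-1)!\big/\big((k(j+1)-s)!\,\Gamma(n+j+1)\big)$ turns this piece into $A^{n,s}_j v^{\nu a^{n,s}_j-\alpha^{n,s}_j}(\lambda+k\mu+v^\nu)^{-a^{n,s}_j}$ once we set $\alpha^{n,s}_j=\nu(a^{n,s}_j-1)+1$ (so that $\nu a^{n,s}_j-\alpha^{n,s}_j=\nu-1$). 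For the two convolution terms I would invoke the technical Lemma \eqref{eq:teclemma} with exponent $n$ equal to $a^{n,s}_j-1$ for the second term and to $a^{n,s+1}_j-1=a^{n,s}_j-2$ (resp. $a^{n+1,1}_j-1$ when $s=k$) for the third; this brings in the extra sums over $m,r$, the factor $C^0_{m,r}$, and a factorial which, together with the $k\mu$ and the other prefactors, collapses into exactly $B^{n,s}_{j,m,r}=k\mu C^0_{m,r}A^{n,s}_j$ and $C^{n,s}_{j,m,r}=k\mu C^0_{m,r}A^{n,s+1}_j$ (resp. $k\mu C^0_{m,r}A^{n+1,1}_j$), with denominator exponents $b^{n,s}_{j,m,r}=\delta^0_{m,r}+a^{n,s}_j$ and $c^{n,s}_{j,m,r}=\delta^0_{m,r}+a^{n,s+1}_j$ (resp. $\delta^0_{m,r}+a^{n+1,1}_j$). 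Summing these contributions gives the asserted formula for $\pi_{n,s}^\nu(v)$.

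Finally, every summand of $\pi_{n,s}^\nu(v)$ has precisely the form $v^{\nu\delta-\gamma}(v^\nu-w)^{-\delta}$ with $w=-(\lambda+k\mu)$ and $\gamma=\nu(\delta-1)+1>0$, so inverting term by term through \eqref{eq:ML3LT} produces the stated Mittag-Leffler series for $p_{n,s}^\nu(t)$. I expect the main obstacle to be bookkeeping rather than anything conceptual: one must verify carefully the several combinatorial identities that convert the factorials emerging from the integrals into the binomial coefficients $A^{n,s}_j$, and keep the index shifts straight between the $s<k$ and $s=k$ cases. A secondary point is to justify interchanging the (triple) series with the integral and performing the Laplace inversion termwise, which follows from absolute convergence exactly as in the proof of the formula for $p_0^\nu(t)$.
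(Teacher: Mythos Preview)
Your proposal is correct and follows essentially the same route as the paper: start from the subordination identity $\pi_{n,s}^\nu(v)=v^{\nu-1}\int_0^{+\infty}p_{n,s}^1(y)e^{-yv^\nu}\,dy$, plug in the explicit classical expressions \eqref{eq:pns1} and \eqref{eq:pnk1}, handle the power-exponential piece by direct integration and the two convolution pieces via the Lemma \eqref{eq:teclemma}, then invert termwise with \eqref{eq:ML3LT}. Your remarks on the combinatorial bookkeeping and on justifying the termwise operations are accurate and in fact slightly more explicit than the paper's own presentation.
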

\begin{proof}
From Eq. \eqref{eq:pnsnuint} we easily obtain
\begin{equation}\label{eq:pins}
\pi_{n,s}^\nu(v)=v^{\nu-1}\int_0^{+\infty}p^1_{n,s}(y)e^{-yv^\nu}dy, \ v>0
\end{equation}
Let us first consider $1 \le s \le k-1$; by using Eq. \eqref{eq:pns1}, we have
\begin{align*}
\begin{split}
\pi_{n,s}^\nu(v)&\!=\!\sum_{j=0}^{+\infty}\frac{\lambda^{n+j}(k\mu)^{k(j+1)-s}}{(k(j+1)-s)!\Gamma(n+j+1)}v^{\nu-1}
\int_{0}^{+\infty}\hspace*{-0.5cm}y^{n+k-s+j(k+1)}e^{-(\lambda+k\mu+v^\nu)y}dy\\
&+\sum_{j=0}^{+\infty}\frac{\lambda^{n+j}(k\mu)^{k(j+1)-s+1}}{(k(j+1)-s)!\Gamma(n+j+1)}v^{\nu-1} \\ &\times \int_0^{+\infty}\int_0^{y}p_0^1(z)(y-z)^{n+k-s+j(k+1)}e^{-(\lambda+k\mu)(y-z)}e^{-yv^\nu}dzdy\\&-\sum_{j=0}^{+\infty}\frac{\lambda^{n+j}(k\mu)^{k(j+1)-s}}{(k(j+1)-s-1)!\Gamma(n+j+1)}v^{\nu-1} \\ & \times\int_0^{+\infty}\int_0^{y}p_0^1(z)(y-z)^{n+k-s-1+j(k+1)}e^{-(\lambda+k\mu)(y-z)}e^{-yv^\nu}dzdy.
\end{split}
\end{align*}
Integrating, using Eq. \eqref{eq:teclemma} and posing $A^{n,s}_j$, $a^{n,s}_j$, $\alpha^{n,s}_j$, $B^{n,s}_{j,m,r}$, $b^{n,s}_{j,m,r}$, $\beta^{n,s}_{j,m,r}$, $C^{n,s}_{j,m,r}$, $c^{n,s}_{j,m,r}$ and $\gamma^{n,s}_{j,m,r}$ as in equation \eqref{eq:const} for $s \not = k$ we obtain
\begin{align*}
\begin{split}
\pi_{n,s}^\nu(v)&=\sum_{j=0}^{+\infty}A^{n,s}_j\frac{v^{\nu a^{n,s}_j-\alpha^{n,s}_j}}{(\lambda+k\mu+v^\nu)^{a^{n,s}_j}} +\sum_{j=0}^{+\infty}\sum_{m=1}^{+\infty}\sum_{r=0}^{+\infty}B^{n,s}_{j,m,r}\frac{v^{\nu b^{n,s}_{j,m,r}-\beta^{n,s}_{j,m,r}}}{(\lambda+k\mu+v^\nu)^{b^{n,s}_{j,m,r}}}\\ &-\sum_{j=0}^{+\infty}\sum_{m=1}^{+\infty}\sum_{r=0}^{+\infty}C^{n,s}_{j,m,r}\frac{v^{\nu c^{n,s}_{j,m,r}-\gamma^{n,s}_{j,m,r}}}{(\lambda+k\mu+v^\nu)^{c^{n,s}_{j,m,r}}}.
\end{split}
\end{align*}
Finally, by using Eq. \eqref{eq:ML3LT} we obtain
\begin{align*}
\begin{split}
p_{n,s}^\nu(t)&=\sum_{j=0}^{+\infty}A^{n,s}_jt^{\alpha_j^{n,s}-1}E^{a^{n,s}_j}_{\nu,\alpha_j^{n,s}}(-(\lambda+k\mu)t^\nu)\\ &+\sum_{j=0}^{+\infty}\sum_{m=1}^{+\infty}\sum_{r=0}^{+\infty}B^{n,s}_{j,m,r}t^{\beta_{j,m,r}^{n,s}-1}E^{b^{n,s}_{j,m,r}}_{\nu,\beta_{j,m,r}^{n,s}}(-(\lambda+k\mu)t^\nu)\\ &-\sum_{j=0}^{+\infty}\sum_{m=1}^{+\infty}\sum_{r=0}^{+\infty}C^{n,s}_{j,m,r}t^{\gamma_{j,m,r}^{n,s}-1}E^{c^{n,s}_{j,m,r}}_{\nu,\gamma_{j,m,r}^{n,s}}(-(\lambda+k\mu)t^\nu).
\end{split}
\end{align*}
Now, for $s=k$, from Eq. \eqref{eq:pins} and \eqref{eq:pnk1} we obtain
\begin{align*}
\begin{split}
\pi_{n,k}^\nu(v)&=\sum_{j=0}^{+\infty}\frac{\lambda^{n+j}(k\mu)^{kj}}{(kj)!\Gamma(n+j+1)}v^{\nu-1}
\int_{0}^{+\infty}y^{n+j(k+1)}e^{-(\lambda+k\mu+v^\nu)y}dy\\
&+\sum_{j=0}^{+\infty}\frac{\lambda^{n+j}(k\mu)^{kj+1}}{(kj)!\Gamma(n+j+1)}v^{\nu-1} \\ &\times \int_0^{+\infty}\int_0^{y}p_0^1(z)(y-z)^{n+j(k+1)}e^{-(\lambda+k\mu)(y-z)}e^{-yv^\nu}dzdy\\&-\sum_{j=0}^{+\infty}\frac{\lambda^{n+j+1}(k\mu)^{k(j+1)}}{(k(j+1)-1)!\Gamma(n+j+2)}v^{\nu-1} \\ & \times\int_0^{+\infty}\int_0^{y}p_0^1(z)(y-z)^{n+k+j(k+1)}e^{-(\lambda+k\mu)(y-z)}e^{-yv^\nu}dzdy.
\end{split}
\end{align*}
Integrating, using Eq. \eqref{eq:teclemma} and posing $A^{n,s}_j$, $a^{n,s}_j$, $\alpha^{n,s}_j$, $B^{n,s}_{j,m,r}$, $b^{n,s}_{j,m,r}$, $\beta^{n,s}_{j,m,r}$, $C^{n,s}_{j,m,r}$, $c^{n,s}_{j,m,r}$ and $\gamma^{n,s}_{j,m,r}$ as in equation \eqref{eq:const} for $s = k$ we obtain
\begin{align*}
\begin{split}
\pi_{n,k}^\nu(v)&=\sum_{j=0}^{+\infty}A^{n,k}_j\frac{v^{\nu a^{n,k}_j-\alpha^{n,k}_j}}{(\lambda+k\mu+v^\nu)^{a^{n,k}_j}}\\ &\hspace*{-1cm}\!\!+\!\!\sum_{j=0}^{+\infty}\sum_{m=1}^{+\infty}\sum_{r=0}^{+\infty}B^{n,k}_{j,m,r}\frac{v^{\nu b^{n,k}_{j,m,r}-\beta^{n,k}_{j,m,r}}}{(\lambda+k\mu+v^\nu)^{b^{n,k}_{j,m,r}}}
\!\!-\!\!
\sum_{j=0}^{+\infty}\sum_{m=1}^{+\infty}\sum_{r=0}^{+\infty}C^{n,k}_{j,m,r}\frac{v^{\nu c^{n,k}_{j,m,r}-\gamma^{n,k}_{j,m,r}}}{(\lambda+k\mu+v^\nu)^{c^{n,k}_{j,m,r}}}.
\end{split}
\end{align*}
Finally, by using Eq. \eqref{eq:ML3LT} we obtain
\begin{align*}
\begin{split}
p_{n,k}^\nu(t)&=\sum_{j=0}^{+\infty}A^{n,k}_jt^{\alpha_j^{n,k}-1}E^{a^{n,k}_j}_{\nu,\alpha_j^{n,k}}(-(\lambda+k\mu)t^\nu)\\ &+\sum_{j=0}^{+\infty}\sum_{m=1}^{+\infty}\sum_{r=0}^{+\infty}B^{n,k}_{j,m,r}t^{\beta_{j,m,r}^{n,k}-1}E^{b^{n,k}_{j,m,r}}_{\nu,\beta_{j,m,r}^{n,k}}(-(\lambda+k\mu)t^\nu)\\ &-\sum_{j=0}^{+\infty}\sum_{m=1}^{+\infty}\sum_{r=0}^{+\infty}C^{n,k}_{j,m,r}t^{\gamma_{j,m,r}^{n,k}-1}E^{c^{n,k}_{j,m,r}}_{\nu,\gamma_{j,m,r}^{n,k}}(-(\lambda+k\mu)t^\nu).
\end{split}
\end{align*}
\end{proof}
\section{Some features of the fractional $M/E_k/1$ queue}\label{sec6}
In this section we want to determine some features of the fractional $M/E_k/1$ queue. In particular we focus on the mean queue length (measured in phases), the distribution of the busy period and the distribution of the waiting times.
\subsection{Mean queue length}\label{subsect61}
To determine the mean queue length, let us consider the queue length process $\cL^\nu(t)$ and define:
\begin{equation*}
M^\nu(t)=\E[\cL^\nu(t)|\cL^\nu(0)=0].
\end{equation*}
Let us first show the following Theorem
\begin{thm}
The function $M^\nu(t)$ is solution of the following fractional Cauchy problem:
\begin{equation}\label{eq:meanCp}
\begin{cases}
D_t^\nu M^\nu=k(\lambda-\mu)+k\mu \cP^\nu_0(t) \\
M^\nu(0)=0.
\end{cases}
\end{equation}
\end{thm}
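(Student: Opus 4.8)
The plan is to reduce everything, via the Laplace transform, to the classical statement and to the same integration-by-parts identity already exploited in the proofs of Theorem \ref{thm:fracsys} and Theorem \ref{thm:interarr}. First observe that $M^\nu(0)=0$ is immediate, since $L_\nu(0)=0$ almost surely and $\cL(0)=0$. Recall next that for the classical queue ($\nu=1$) the mean queue length $M^1=M$ satisfies $\der{M^1}{t}(t)=k(\lambda-\mu)+k\mu\cP_0^1(t)$ with $M^1(0)=0$ (equivalently, it is given by \eqref{eq:meanql1}); in particular $0\le \cP_0^1\le 1$ forces the linear bound $|M^1(t)|\le (k|\lambda-\mu|+k\mu)t$. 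Since $\cL^\nu(t)=\cL(L_\nu(t))$ with $L_\nu$ independent of $Q$, conditioning on $L_\nu(t)$ and applying Tonelli (all quantities are nonnegative) gives, exactly as in \eqref{eq:pnsnuint}--\eqref{eq:p0},
\begin{equation*}
M^\nu(t)=\int_0^{+\infty}M^1(y)\,\bP(L_\nu(t)\in dy),
\end{equation*}
which is finite because $\E[L_\nu(t)]<\infty$ and $M^1$ grows at most linearly. Note also that $\cP_0^\nu=p_0^\nu$, so its Laplace transform is $\pi_0^\nu$.

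I would then take Laplace transforms. Using \eqref{eq:Lapinvstab} in the previous display, $\overline{M^\nu}(v)=v^{\nu-1}\int_0^{+\infty}M^1(y)e^{-yv^\nu}dy$, while \eqref{eq:pi0} gives $\pi_0^\nu(v)=v^{\nu-1}\int_0^{+\infty}p_0^1(y)e^{-yv^\nu}dy$. By \eqref{eq:LapDC} and $M^\nu(0)=0$ we have $\cL[D_t^\nu M^\nu](v)=v^\nu\overline{M^\nu}(v)$, so $M^\nu$ solves the Cauchy problem \eqref{eq:meanCp} if and only if
\begin{equation*}
v^\nu\overline{M^\nu}(v)=\frac{k(\lambda-\mu)}{v}+k\mu\,\pi_0^\nu(v),\qquad v>0.
\end{equation*}
Dividing by $v^{\nu-1}$ and writing the constant term as $\frac{k(\lambda-\mu)}{v^\nu}=\int_0^{+\infty}k(\lambda-\mu)e^{-yv^\nu}dy$, this identity becomes
\begin{equation*}
v^\nu\int_0^{+\infty}M^1(y)e^{-yv^\nu}dy=\int_0^{+\infty}\big(k(\lambda-\mu)+k\mu\,p_0^1(y)\big)e^{-yv^\nu}dy.
\end{equation*}

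Finally I would close the argument by integration by parts: since $k(\lambda-\mu)+k\mu p_0^1(y)=\der{M^1}{y}(y)$ by the classical Cauchy problem, the right-hand side equals $\int_0^{+\infty}\der{M^1}{y}(y)e^{-yv^\nu}dy$, and integrating by parts — the boundary term at $0$ vanishes because $M^1(0)=0$, the one at $+\infty$ because $M^1$ is at most linear and $v>0$ — this is exactly $v^\nu\int_0^{+\infty}M^1(y)e^{-yv^\nu}dy$, i.e. the left-hand side. This is the same device used to conclude the proofs of Theorem \ref{thm:fracsys} and Theorem \ref{thm:interarr}. The only points requiring care are the bookkeeping issues — finiteness of $M^\nu(t)$, vanishing of the boundary term at infinity, and the fact that $M^\nu$ lies in the class of functions for which \eqref{eq:LapDC} applies — all of which follow from the linear growth bound on $M^1$ together with the finiteness of the moments of $L_\nu(t)$; the genuinely substantive inputs are the classical formula \eqref{eq:meanql1} and the subordination representation of $M^\nu$.
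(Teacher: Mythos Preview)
Your proof is correct but follows a genuinely different route from the paper's. The paper argues directly at the level of the fractional system \eqref{eq:fracsysql}: it writes $M^\nu(t)=\sum_{n\ge 1}n\cP_n^\nu(t)$, multiplies the $n$-th equation by $n$, sums over $n$, and then simplifies the two resulting sums $\sum_n n\cP_{n+1}^\nu$ and $\sum_n\sum_{m\le n}nc_m\cP_{n-m}^\nu$ by elementary index shifts (using $c_m=\delta_{m,k}$ and $\sum_n\cP_n^\nu=1$) to land exactly on $k(\lambda-\mu)+k\mu\cP_0^\nu$. No Laplace transforms, no subordination, no appeal to the classical formula \eqref{eq:meanql1}. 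Your argument instead pushes everything through the representation $M^\nu(t)=\int_0^\infty M^1(y)\,\bP(L_\nu(t)\in dy)$ and the Laplace transform, reducing the fractional statement to the classical ODE for $M^1$ via the same integration-by-parts trick used for Theorems \ref{thm:fracsys} and \ref{thm:interarr}. What this buys you is a uniform mechanism: any first-order ODE satisfied by a functional of the classical queue transfers automatically to its fractional analogue, without redoing the combinatorics; indeed the paper itself uses the subordination representation of $M^\nu$ in the \emph{next} theorem to obtain the explicit formula \eqref{eq:mean}. What the paper's approach buys is that it avoids the analytic bookkeeping you flag at the end (finiteness, decay of boundary terms, and in particular that $M^\nu$ has enough regularity for \eqref{eq:LapDC} to apply), since it only needs term-by-term Caputo differentiation of the series $\sum_n n\cP_n^\nu$ and the already-established equations \eqref{eq:fracsysql}.
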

\begin{proof}
The initial condition follows obviously from the definition of $M^\nu(t)$.\\
To obtain the equation, recall that, by definition:
\begin{equation*}
M^\nu(t)=\sum_{n=1}^{+\infty}n\cP_n^\nu(t).
\end{equation*}
Thus, let us consider Eq. \eqref{eq:fracsysql}, multiply the second equation by $n$ and then sum over $n$ to obtain
\begin{equation}\label{eq:meaneqpass3}
D_t^\nu M^\nu(t)=-(\lambda+k\mu)M^\nu(t)+k\mu \sum_{n=1}^{+\infty}n\cP_{n+1}^\nu(t)+\lambda \sum_{n=1}^{+\infty}\sum_{m=1}^{n}c_m\cP^\nu_{n-m}(t).
\end{equation}
Let us observe that
\begin{equation}\label{eq:meaneqpass1}
\sum_{n=1}^{+\infty}n\cP_{n+1}^\nu(t)=M^\nu(t)-\sum_{n=0}^{+\infty}\cP_{n+1}^\nu(t)=M^\nu(t)+\cP_0^\nu(t)-1
\end{equation}
and
\begin{equation*}
\sum_{n=1}^{+\infty}\sum_{m=1}^{n}nc_m\cP_{n-m}^\nu(t)=\sum_{m=1}^{+\infty}\sum_{n=m}^{+\infty}nc_m\cP_{n-m}^\nu(t).
\end{equation*}
Now let us pose $r=n-m$ to obtain
\begin{align*}
\begin{split}
\sum_{m=1}^{+\infty}\sum_{n=m}^{+\infty}nc_m\cP_{m-n}^\nu(t)&=\sum_{m=1}^{+\infty}\sum_{r=0}^{+\infty}(r+m)c_m\cP_{r}^\nu(t)\\&=\sum_{m=1}^{+\infty}c_m\sum_{r=0}^{+\infty}r\cP_{r}^\nu(t)+\sum_{m=1}^{+\infty}mc_m\sum_{r=0}^{+\infty}\cP_{r}^\nu(t).
\end{split}
\end{align*}
Recalling that $c_m=\delta_{m,k}$, we have
\begin{equation}\label{eq:meaneqpass2}
\sum_{m=1}^{+\infty}c_m\sum_{r=0}^{+\infty}r\cP_{r}^\nu(t)+\sum_{m=1}^{+\infty}mc_m\sum_{r=0}^{+\infty}\cP_{r}^\nu(t)=M^\nu(t)+k.
\end{equation}
Using Eqs. \eqref{eq:meaneqpass1} and \eqref{eq:meaneqpass2} in \eqref{eq:meaneqpass3} we obtain the first equation of \eqref{eq:meanCp}.
\end{proof}
By integrating \eqref{eq:meanCp}, we obtain the following representation of the mean queue length:
\begin{equation}\label{eq:meanint}
M^\nu(t)=\frac{k(\lambda-\mu)t^\nu}{\Gamma(\nu+1)}+k\mu \cI^\nu_t \cP_0^\nu
\end{equation}
For $k=1$ we obtain the mean queue length of a fractional $M/M/1$ queue, as stated in Theorem $2.3$ of \cite{CahoyPolitoPhoha2015}. For $\nu=1$ we obtain Eq. \eqref{eq:meanql1}.\\
Starting from Eq. \eqref{eq:meanql1} we can also obtain a closed form for $M^\nu(t)$.
\begin{thm}
Let $M^\nu(t)=\E[\cL^\nu(t)|\cL^\nu(0)=0]$ and denote with $\overline{M}^\nu(v)$ its Laplace transform. Then
\begin{equation}\label{eq:mean}
M^\nu(t)=k(\lambda-\mu)\frac{t^\nu}{\Gamma(\nu+1)}+k\mu \sum_{m=1}^{+\infty}\sum_{r=0}^{+\infty}C^0_{m,r}t^{\gamma^M_{m,r}-1}E^{\delta^0_{m,r}}_{\nu,\gamma^M_{m,r}}(-(\lambda+k\mu)t^\nu)
\end{equation}
and
\begin{equation*}
\overline{M}^\nu(v)=k(\lambda-\mu)\frac{1}{v^{\nu+1}}+k\mu \sum_{m=1}^{+\infty}\sum_{r=0}^{+\infty}C^0_{m,r}\frac{v^{\nu \delta^0_{m,r}-\gamma^M_{m,r}}}{(\lambda+k\mu+v^\nu)^{\delta^0_{m,r}}}
\end{equation*}
where
\begin{equation}\label{eq:gammaM}
\gamma^M_{m,r}=\nu \delta^{0}_{m,r}+1
\end{equation}
and $C^0_{m,r}$ and $\delta^0_{m,r}$ are defined in \eqref{eq:C0mr}.
\end{thm}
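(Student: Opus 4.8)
The plan is to pass to the Laplace domain in the integral representation \eqref{eq:meanint} and then invert term by term by means of the three‑parameter Mittag--Leffler Laplace formula \eqref{eq:ML3LT}. Recall that the fractional integral $\cI^\nu_t$ acts on Laplace transforms as multiplication by $v^{-\nu}$, so that $\cL[\cI^\nu_t\cP^\nu_0](v)=v^{-\nu}\pi^\nu_0(v)$, and that $\cL[t^\nu/\Gamma(\nu+1)](v)=v^{-\nu-1}$. Applying $\cL$ to \eqref{eq:meanint} therefore gives
\begin{equation*}
\overline{M}^\nu(v)=k(\lambda-\mu)\frac{1}{v^{\nu+1}}+k\mu\, v^{-\nu}\pi^\nu_0(v).
\end{equation*}

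Next I would insert the already‑established expression \eqref{eq:pi0nu} for $\pi^\nu_0(v)$ and keep track of the exponents. Since $\gamma^0_{m,r}=\nu(\delta^0_{m,r}-1)+1$ by \eqref{eq:C0mr}, we have $\gamma^0_{m,r}+\nu=\nu\delta^0_{m,r}+1=\gamma^M_{m,r}$ by the definition \eqref{eq:gammaM}; hence
\begin{equation*}
v^{-\nu}\,\frac{v^{\nu\delta^0_{m,r}-\gamma^0_{m,r}}}{(\lambda+k\mu+v^\nu)^{\delta^0_{m,r}}}=\frac{v^{\nu\delta^0_{m,r}-\gamma^M_{m,r}}}{(\lambda+k\mu+v^\nu)^{\delta^0_{m,r}}},
\end{equation*}
which is precisely the claimed formula for $\overline{M}^\nu(v)$. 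Finally, inverting each summand via \eqref{eq:ML3LT} with $\gamma=\gamma^M_{m,r}$, $\delta=\delta^0_{m,r}$ and $w=-(\lambda+k\mu)$, together with the inversion of $v^{-\nu-1}$ into $t^\nu/\Gamma(\nu+1)$, produces \eqref{eq:mean}.

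The main obstacle is justifying that the Laplace transform, and its inversion, commute with the double summation over $m$ and $r$. I would handle this exactly as in the proof of \eqref{eq:p0nu}, since $M^\nu$ is obtained from $\cP^\nu_0$ only by a fractional integration and the addition of a polynomial term, and the finiteness of $M^\nu(t)$ is guaranteed by \eqref{eq:meanint} together with the finiteness of $\cI^\nu_t\cP^\nu_0$. A minor technical point is that \eqref{eq:ML3LT} is stated under $|wv^\nu|<1$; this is harmless, because the identity for $\overline{M}^\nu(v)$ may first be obtained for $v$ large and then extended to all $v>0$ by analyticity (equivalently, by analytic continuation of \eqref{eq:ML3LT}), while the uniqueness of the Laplace transform then pins down $M^\nu(t)$. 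For $\nu=1$ one recovers \eqref{eq:meanql1}, since in that case $\gamma^M_{m,r}=\delta^0_{m,r}+1$ and the three‑parameter Mittag--Leffler functions collapse to exponentials exactly as in the remarks following \eqref{eq:p0nu}.
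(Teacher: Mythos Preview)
Your argument is correct and, if anything, more streamlined than the paper's. The only substantive difference lies in how the Laplace transform $\overline{M}^\nu(v)$ is obtained. The paper returns to the time-change representation
\[
M^\nu(t)=\int_0^{+\infty}M^1(y)\,\bP(L_\nu(t)\in dy),
\]
substitutes the classical formula \eqref{eq:meanql1} for $M^1$, applies \eqref{eq:Lapinvstab}, and then evaluates $\int_0^{+\infty}\cP_0^1(z)e^{-zv^\nu}dz$ from the explicit series \eqref{eq:p01}; this last step essentially rederives $\pi_0^\nu(v)/v^{\nu-1}$. You instead start from the fractional-integral representation \eqref{eq:meanint}, use the standard identity $\cL[\cI^\nu_t f](v)=v^{-\nu}\cL[f](v)$, and plug in the already-proven formula \eqref{eq:pi0nu} for $\pi_0^\nu$; the exponent bookkeeping $\gamma^0_{m,r}+\nu=\gamma^M_{m,r}$ then yields the claim immediately. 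Your route is shorter and stays entirely within the fractional setting, while the paper's route has the advantage of being self-contained from the classical formulas and not relying on \eqref{eq:meanint}. The final inversion step via \eqref{eq:ML3LT} is identical in both proofs, and your remarks on the interchange of sum and Laplace transform and on the domain of validity of \eqref{eq:ML3LT} are more explicit than what the paper provides.
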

\begin{proof}
From Eq. \eqref{eq:pnsnuint} and the equality $\cP_m^\nu=p_{n_k(m),s_k(m)}^\nu$ we easily obtain that
\begin{equation*}
M^\nu(t)=\int_0^{+\infty}M^1(y)\bP(L_\nu(t)\in dy).
\end{equation*}
Using \eqref{eq:meanql1} we obtain
\begin{equation*}
M^\nu(t)=k(\lambda-\mu)\int_0^{+\infty}y\bP(L_\nu(t)\in dy)+k\mu \int_0^{+\infty}\int_0^{y}\cP_0^1(z)dz\bP(L_\nu(t)\in dy).
\end{equation*}
Taking the Laplace transform we obtain
{\small\begin{equation}\label{eq:meanclforpass1}
\overline{M}^\nu(v)=k(\lambda-\mu)v^{\nu-1}\int_0^{+\infty}ye^{-yv^\nu}dy+k\mu v^{\nu-1}\int_0^{+\infty}\int_0^{y}\cP^1_0(z)e^{-yv^\nu}dzdy.
\end{equation}}
For the first integral we easily obtain
\begin{equation}\label{eq:meanclforpass2}
\int_0^{+\infty}ye^{-yv^\nu}dy=\frac{1}{v^{2\nu}}.
\end{equation}
For the second one we have
\begin{align}\label{eq:meanclforpass3}
\begin{split}
\int_0^{+\infty}\int_0^{y}\cP^1_0(z)e^{-yv^\nu}dzdy&=\int_0^{+\infty}\int_z^{+\infty}\cP^1_0(z)e^{-yv^\nu}dydz\\&=\frac{1}{v^\nu}\int_0^{+\infty}\cP_0^1(z)e^{-zv^\nu}dz.
\end{split}
\end{align}
By using Eq. \eqref{eq:p01} we obtain
\begin{align}\label{eq:meanclforpass4}
\begin{split}
\int_0^{+\infty}\cP_0^1(z)e^{-zv^\nu}dz&=\sum_{m=1}^{+\infty}\sum_{r=0}^{+\infty}\frac{m\lambda^r(k\mu)^{m+rk-1}}{r!\Gamma(n+rk+1)}
\int_0^{+\infty}\hspace*{-0.5cm}z^{m+r(k+1)-1}e^{-(\lambda+k\mu+v^\nu)z}dz\\&=\sum_{m=1}^{+\infty}\sum_{r=0}^{+\infty}C^0_{m,r}\frac{1}{(\lambda+k\mu+v^\nu)^{m+r(k+1)}}.
\end{split}
\end{align}
Using Eqs. \eqref{eq:meanclforpass4}, \eqref{eq:meanclforpass3} and \eqref{eq:meanclforpass2} in \eqref{eq:meanclforpass1} we obtain
\begin{equation*}
\overline{M}^\nu(v)=k(\lambda-\mu)\frac{1}{v^{\nu+1}}+k\mu \sum_{m=1}^{+\infty}\sum_{r=0}^{+\infty}C^0_{m,r}\frac{v^{-1}}{(\lambda+k\mu+v^\nu)^{m+r(k+1)}}.
\end{equation*}
Posing then $\gamma^M_{m,r}$ as in \eqref{eq:gammaM} we obtain
\begin{equation*}
\overline{M}^\nu(v)=k(\lambda-\mu)\frac{1}{v^{\nu+1}}+k\mu \sum_{m=1}^{+\infty}\sum_{r=0}^{+\infty}C^0_{m,r}\frac{v^{\nu \delta^0_{m,r}-\gamma^M_{m,r}}}{(\lambda+k\mu+v^\nu)^{\delta^0_{m,r}}}.
\end{equation*}
Finally, taking the inverse Laplace transform and recalling eq. \eqref{eq:ML3LT} we obtain
\begin{equation*}
M^\nu(t)=k(\lambda-\mu)\frac{t^\nu}{\Gamma(\nu+1)}+k\mu \sum_{m=1}^{+\infty}\sum_{r=0}^{+\infty}C^0_{m,r}t^{\gamma^M_{m,r}-1}E^{\delta^0_{m,r}}_{\nu,\gamma^M_{m,r}}(-(\lambda+k\mu)t^\nu).
\end{equation*}
\end{proof}
Comparing Eq. \eqref{eq:meanint} with \eqref{eq:mean} we also obtain some information on the fractional integral of $\cP_0^\nu(t)$. In particular
\begin{equation*}
\cI^\nu_t \cP_0^\nu=\sum_{m=1}^{+\infty}\sum_{r=0}^{+\infty}C^0_{m,r}t^{\gamma^M_{m,r}-1}E^{\delta^0_{m,r}}_{\nu,\gamma^M_{m,r}}(-(\lambda+k\mu)t^\nu).
\end{equation*}
\subsection{Distribution of the busy period}\label{subsec62}
We can obtain the distribution of the busy period for a fractional $M/E_k/1$ queue by following the lines of \cite{Luchak1956}.
\begin{thm}
Let $B^\nu$ be the duration of the busy period of a fractional $M/E_k/1$ queue, $B^\nu(t)$ be its distribution function and $\overline{B}^\nu(v)$ be its Laplace transform. Then we have
\begin{equation*}
B^\nu(t)=\sum_{r=0}^{+\infty}C^B_rt^{\gamma^B_r-1}E^{\delta^B_r}_{\nu,\gamma^B_r}(-(\lambda+k\mu)t^\nu)
\end{equation*}
and
\begin{equation*}
\overline{B}^\nu(v)=\sum_{r=0}^{+\infty}C^B_r\frac{v^{\nu\delta^B_r-\gamma^B_r}}{(\lambda+k\mu+v^\nu)^{\delta^B_r}}
\end{equation*}
where
\begin{align}\label{eq:CBgammaB}
C^B_r=k\mu C^0_{k,r} && \gamma^B_r=\nu \delta^0_{k,r}+1
\end{align}
with $C^0_{k,r}$ and $\delta^0_{k,r}$ defined in Eqs. \eqref{eq:C0mr}. 
\end{thm}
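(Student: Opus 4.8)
The plan is to follow the same route used for the closed forms of $p_0^\nu$ and $M^\nu$: express $B^\nu(t)$ as a time–subordination of the classical busy‑period distribution $B^1=B$ of \eqref{eq:B1} (Luchak's formula), pass to Laplace transforms by \eqref{eq:Lapinvstab}, carry out the elementary integrations, and invert term by term with \eqref{eq:ML3LT}. Throughout, $\delta^B_r:=\delta^0_{k,r}=k+r(k+1)$.

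The one genuinely delicate point is the subordination identity
\[
B^\nu(t)=\int_0^{+\infty}B^1(y)\,\bP(L_\nu(t)\in dy),
\]
which does \emph{not} follow from \eqref{eq:pnsnuint}, since the busy period is not a fixed‑time functional of $Q^\nu$. To get it, let $[\tau,\tau+\beta]$ be a busy period of the non‑time‑changed chain $Q$, where $\tau$ is a (stopping) time at which a customer joins the empty system and $\beta$ is the classical busy‑period length, a functional of $Q$ alone. The corresponding busy period of $Q^\nu=Q(L_\nu(\cdot))$ is $\{t:L_\nu(t)\in(\tau,\tau+\beta)\}$, an interval of length $\sigma_\nu(\tau+\beta)-\sigma_\nu(\tau)$ (using that $\sigma_\nu$ has no fixed‑time discontinuities). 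Since $\{\sigma_\nu(t)\}$ is a L\'evy process independent of $Q$, this increment is, conditionally on $\beta$, distributed as $\sigma_\nu(\beta)$ with $\sigma_\nu\perp\beta$; hence $B^\nu\stackrel{d}{=}\sigma_\nu(B^1)$ with $\sigma_\nu\perp B^1$. Combining the duality $\bP(\sigma_\nu(u)\le t)=\bP(L_\nu(t)\ge u)$ with Fubini (the integrand being nonnegative) then yields the displayed identity. This is the step I expect to need the most care; everything after it is computation.

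Taking Laplace transforms, using \eqref{eq:Lapinvstab}, and integrating by parts (legitimate because $B^1(0)=0$, the busy period being a.s. strictly positive),
\[
\overline{B}^\nu(v)=v^{\nu-1}\int_0^{+\infty}B^1(y)e^{-yv^\nu}\,dy=\frac{1}{v}\int_0^{+\infty}b^1(y)e^{-yv^\nu}\,dy,
\]
where $b^1$ is the density of $B^1$, read off from \eqref{eq:B1}. Substituting the series for $b^1$, integrating term by term (Tonelli), and simplifying the $r$‑th coefficient via $\Gamma(k+r(k+1))/(r!\,\Gamma(rk+k+1))=\tfrac{1}{k+r(k+1)}\binom{k+r(k+1)}{k(r+1)}$ collapses it exactly to $C^B_r=k\mu C^0_{k,r}$ of \eqref{eq:CBgammaB}; since $\gamma^B_r=\nu\delta^B_r+1$ one has $v^{-1}=v^{\nu\delta^B_r-\gamma^B_r}$, so
\[
\overline{B}^\nu(v)=\sum_{r=0}^{+\infty}C^B_r\,\frac{v^{\nu\delta^B_r-\gamma^B_r}}{(\lambda+k\mu+v^\nu)^{\delta^B_r}}.
\]
Applying \eqref{eq:ML3LT} with $\gamma=\gamma^B_r$, $\delta=\delta^B_r$, $w=-(\lambda+k\mu)$ inverts each summand, the term‑by‑term inversion being justified because both the series of transforms and the series of inverses converge (the latter to a subprobability distribution function). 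This gives the claimed formula for $B^\nu(t)$; as a check, $\nu=1$ recovers \eqref{eq:B1} and $k=1$ the fractional $M/M/1$ busy period.
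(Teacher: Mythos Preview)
Your proof is correct and follows the same overall route as the paper: establish the subordination identity $B^\nu(t)=\int_0^\infty B^1(y)\,\bP(L_\nu(t)\in dy)$, take Laplace transforms via \eqref{eq:Lapinvstab}, integrate the resulting Gamma integrals, identify the constants as $C^B_r=k\mu C^0_{k,r}$, and invert term by term with \eqref{eq:ML3LT}. The computational part is essentially identical (your integration by parts is equivalent to the paper's Fubini step).

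The one genuine difference is in how the subordination identity is justified. The paper introduces an auxiliary process $\wcL^1$ that behaves like $\cL^1$ but starts from $k$ and has $0$ absorbing, so that $\wcP^1_0=B^1$; it then time-changes this auxiliary process and appeals to the semi-Markov structure of $\cL^\nu$ to argue $\wcP^\nu_0=B^\nu$. Your argument instead works directly with the subordinator: a busy interval of $Q$ is mapped by $\sigma_\nu$ to a busy interval of $Q^\nu$, and the L\'evy (stationary independent increments) property of $\sigma_\nu$, together with its independence from $Q$, gives $B^\nu\overset{d}{=}\sigma_\nu(B^1)$; duality with $L_\nu$ then yields the displayed integral. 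Your route is more explicit about why the busy period, despite being a path functional rather than a fixed-time marginal, still inherits the subordination structure, and it avoids introducing the auxiliary absorbing chain; the paper's route has the advantage of reusing exactly the same device as in Theorems~\ref{thm:interarr} and~\ref{thm:interph}. Either justification is adequate.
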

\begin{proof}
Let us consider the process $\widetilde{\cL}^1(t)$ whose state probabilities
\begin{equation*}
\widetilde{\cP}_n^1(t)=\bP(\widetilde{\cL}^1(t)=n|\widetilde{\cL}^1(0)=1)
\end{equation*}
satisfy the following Cauchy problem:
\begin{equation*}
\begin{cases}
\der{\wcP_0}{t}(t)=k\mu \wcP_1(t) \\
\der{\wcP_1}{t}(t)=-(\lambda+k\mu) \wcP_1(t)+k\mu \wcP_2(t) \\
\der{\wcP_n}{t}(t)=-(\lambda+k\mu) \wcP_{n}(t)+k\mu \wcP_{n+1}(t)+\lambda \sum_{m=1}^{n}c_m\wcP_{n-m}(t) & n \ge 2\\
\wcP_n(0)=\delta_{n,k} & n \ge 0.
\end{cases}
\end{equation*}
The process $\wcL^1(t)$behaves as $\cL^1(t)$ except that it starts from $k$ (that is to say when the first customer entered the queue) instead that from $0$ and admits $0$ as absorbent state. In particular, by construction, $\wcP_0^1(t)=B^1(t)$.\\
Now let us consider $\wcL^\nu(t):=\wcL^1(L_\nu(t))$. As done with the interarrival and the inter-phase times, if $T_n$ is the $n$-th jump time of the process $\cL^\nu(t)$, then $\cL^\nu(T_n)$ is a time-homogeneous Markov chain, then, by construction, $\wcP_0^\nu(t)=B^\nu(t)$.\\
From the relation between $\wcL^1$ and $\wcL^\nu$, we have that
\begin{equation*}
B^\nu(t)=\int_0^{+\infty}B^1(y)\bP(L_\nu(t)\in dy).
\end{equation*}
Thus, by using Eq. \eqref{eq:B1} we have
\begin{equation*}
B^\nu(t)=\sum_{r=0}^{+\infty}\frac{k\lambda^r(k\mu)^{k(r+1)}}{r!\Gamma(rk+k+1)}\int_0^{+\infty}\int_0^{y}z^{k+r(k+1)-1}e^{-(\lambda+k\mu)z}dz\bP(L_\nu(t)\in dy).
\end{equation*}
Taking the Laplace transform we have
\begin{equation}\label{eq:Bpass1}
\overline{B}^\nu(v)=\sum_{r=0}^{+\infty}\frac{k\lambda^r(k\mu)^{k(r+1)}}{r!\Gamma(rk+k+1)}v^{\nu-1}\int_0^{+\infty}\int_0^{y}z^{k+r(k+1)-1}e^{-(\lambda+k\mu)z}e^{-yv^\nu}dzdy.
\end{equation}
Now let us consider the integral and observe that:
\begin{align}\label{eq:Bpass2}
\begin{split}
\int_0^{+\infty}\int_0^{y}z^{k+r(k+1)-1}&e^{-(\lambda+k\mu)z}e^{-yv^\nu}dzdy\\
&\hspace*{-2cm}=\int_0^{+\infty}\int_z^{+\infty}z^{k+r(k+1)-1}e^{-(\lambda+k\mu)z}e^{-yv^\nu}dydz\\
&\hspace*{-2cm}=\frac{1}{v^\nu}\int_0^{+\infty}z^{k+r(k+1)-1}e^{-(\lambda+k\mu+v^\nu)z}dz
=\frac{1}{v^\nu}\frac{(k+r(k+1)-1)!}{(\lambda+k\mu+v^\nu)^{k+r(k+1)}}.
\end{split}
\end{align}
Using Eq. \eqref{eq:Bpass2} in Eq. \eqref{eq:Bpass1} we obtain
\begin{equation*}
\overline{B}^\nu(v)=\sum_{r=0}^{+\infty}\frac{k\lambda^r(k\mu)^{k(r+1)}(k+r(k+1)-1)!}{r!\Gamma(rk+k+1)}\frac{v^{-1}}{(\lambda+k\mu+v^\nu)^{k+r(k+1)}}.
\end{equation*}
Thus, posing $C^B_r$ and $\gamma^B_r$ as in \eqref{eq:CBgammaB} we obtain
\begin{equation*}
\overline{B}^\nu(v)=\sum_{r=0}^{+\infty}C^B_r\frac{v^{\nu\delta^B_r-\gamma^B_r}}{(\lambda+k\mu+v^\nu)^{\delta^B_r}}.
\end{equation*}
Finally, taking the inverse Laplace transform and using Eq. \eqref{eq:ML3LT}, we obtain
\begin{equation*}
B^\nu(t)=\sum_{r=0}^{+\infty}C^B_rt^{\gamma^B_r-1}E^{\delta^B_r}_{\nu,\gamma^B_r}(-(\lambda+k\mu)t^\nu).
\end{equation*}
\end{proof}
\subsection{Distribution of some conditioned waiting times}\label{subsec63}
For classical single-channel queues, a method to obtain the waiting time distribution has been presented in \cite{Gaver1954}. However, this technique makes an explicit use of the Markov property of a classical single-channel queue. Our fractional queues are semi-Markov processes, then this technique cannot be used in our context. However, to give some information on the waiting times we can introduce some form of conditioning.\\
Before working with the waiting times, let us introduce a new distribution function that will be useful in the following. Consider a random variable $T \sim ML_\nu(\lambda)$ and fix $t_0>0$. Observe that
\begin{equation*}
\bP(T\le t_0+t|T \ge t_0)=1-\frac{E_\nu(-\lambda(t_0+t)^\nu)}{E_\nu(-\lambda t_0^\nu)}.
\end{equation*}
The explicit dependence of this probability with respect to $t_0$ is due to the lack of semigroup property that is typical of Mittag-Leffler functions (see, for instance, \cite{KexuePeng2010}). This observation leads us to the definition of a new distribution.\\
Consider a random variable $T$ such that its distribution function is given by
\begin{equation*}
F_T(t)=1-\frac{E_\nu(-\lambda(t_0+t)^\nu)}{E_\nu(-\lambda t_0^\nu)};
\end{equation*}
we call such random variable \textit{residual Mittag-Leffler} random variable starting from $t_0$ with fractional index $\nu$ and parameter $\lambda$ and we denote it by $T \sim RML_\nu(t_0,\lambda)$. Denote
\begin{equation*}
\Phi(t):=E_\nu(-\lambda(t_0+t)^\nu)
\end{equation*}
and observe that, by the definition of Mittag-Leffler function given in \eqref{eq:ML1}, we have
\begin{equation*}
\Phi(t)=\sum_{k=0}^{+\infty}\frac{(-\lambda)^k}{\Gamma(\nu k+1)}(t_0+t)^{\nu k}.
\end{equation*}
Denote with $\overline{\Phi}(v)$ its Laplace transform and observe that
\begin{equation*}
\overline{\Phi}(v)=\sum_{k=0}^{+\infty}\frac{(-\lambda)^k}{\Gamma(\nu k+1)}e^{vt_0}v^{-1-\nu k}\Gamma(\nu k+1,vt_0), \ v>0,
\end{equation*}
where $\Gamma(z,s)$ is the analytic extension of the upper incomplete gamma function on $\C^2$ (see, for instance, \cite{Lang2013,Olver1997}). Denote with $f_T(t)$ the probability density function of $T$ and observe that
\begin{equation*}
f_T(t)=-\frac{1}{E_\nu(-\lambda t_0^\nu)}\der{\Phi(t)}{t}.
\end{equation*}
Denoting with $\overline{f}_T(v)$ its Laplace transform, we obtain
\begin{equation*}
\overline{f}_T(v)=1-\frac{e^{vt_0}}{E_\nu(-\lambda t_0^\nu)}\sum_{k=0}^{+\infty}\frac{(-\lambda)^k}{\Gamma(\nu k+1)}v^{-\nu k}\Gamma(\nu k+1,vt_0), \ v>0.
\end{equation*}
Finally let us also observe that if $T \sim RML_\nu(t_0,\lambda)$ for $\nu=1$, then $T \sim Exp(\lambda)$. Now we are ready to determine such waiting times.\\
Let $W^\nu_t$ be the waiting time of a customer that enters the queue at time $t>0$. Let us define the random set
\begin{equation*}
\cA_t^\nu(\omega)=\{0 \le s \le t: \ \cL^\nu(s-)(\omega)=\cL^\nu(s)(\omega)+1\}
\end{equation*}
that is the set of the time instants that precede $t$ in which a phase has been completed. Let us define also the random variable
\begin{equation*}
\cN_t^\nu=|\cA^\nu_t|
\end{equation*}
which is the number of phase terminated before $t$. Finally, let us define the random variable
\begin{equation*}
\Theta_t^\nu=\begin{cases} \sup \cA^\nu_t & \cN_t^\nu>0\\
t & \cN_t^\nu=0
\end{cases}
\end{equation*}
that is the last instant before $t$ in which a phase has been completed. Let us define:
\begin{equation*}
w^\nu(\xi;t,t_0,n)d\xi=\bP(W^\nu_t \in d\xi| \Theta_t^\nu=t_0, \cL^\nu(t)=n).
\end{equation*}
Since we have conditioned this probability with $\cL^\nu(t)=n$, we know that a customer has to wait $n$ independent phases to be terminated. Thus we have
\begin{equation*}
\E[W^\nu_t|\Theta_t^\nu=t_0, \cL^\nu(t)=n]=\sum_{j=1}^{n}T_j
\end{equation*}
where $T_j$ are $n$ independent random variables. For $1 \le j \le n-1$, these are just Mittag-Leffler random variables with $T_j \sim ML_\nu(k\mu)$, thus, denoting $\widetilde{W}=\sum_{j=1}^{n-1}T_j$, we know that $\widetilde{W}\sim GE_\nu(n-1,k\mu)$. For $T_n$, we know that the last time a phase has been completed is $t_0$, since we have conditioned the probability with $\Theta_t^\nu=t_0$. So we know that a customer is in the current phase for $t-t_0$. For such reason, we know that $T_n \sim RML_\nu(t-t_0,k\mu)$. Thus we have that
\begin{equation*}
w^\nu(\xi;t,t_0,n)=f_{\widetilde{W}}\ast f_{T_n}(\xi).
\end{equation*}
In particular we can obtain the Laplace transform $\overline{w}^\nu(v;t,t_0,n)$ of $w^\nu(\xi;t,t_0,n)$ as
\begin{multline*}
\overline{w}^\nu(v;t,t_0,n)=\frac{(k\mu)^{n-1}}{(k\mu+v^\nu)^{n-1}}\times \\ \times\left[1-\frac{e^{v(t-t_0)}}{E_\nu(-\lambda (t-t_0)^\nu)}\sum_{k=0}^{+\infty}\frac{(-k\mu)^k}{\Gamma(\nu k+1)}v^{-\nu k}\Gamma(\nu k+1,v(t-t_0))\right].
\end{multline*}
For $\nu=1$, we know that $T_n \sim Exp(k\mu)$ and $\widetilde{W}\sim E_{n-1}(k\mu)$, thus $T_n+\widetilde{W}\sim E_{n}(k\mu)$ and we know that
\begin{equation*}
w^1(\xi;t,t_0,n)=\frac{(k\mu)^{n}\xi^{n-1}e^{-k\mu \xi}}{(n-1)!}=w^1(\xi;t,n)
\end{equation*}
since the density is independent from $t_0$. Thus we obtain
\begin{equation*}
w^1(\xi;t)=\sum_{n=0}^{+\infty}\cP_n^1(t)w^1(\xi;t,n)
\end{equation*}
that is Eq. \eqref{eq:w1unco}, that is the one given in \cite{Gaver1954}.
\section{Sample paths simulation}\label{sec7}
In this section we want to give some algorithm to simulate such queue process. To do this, we need to show how to simulate a Mittag-Leffler random variable. Let us show two preliminary simple lemmas.
\begin{lem}
Let $T \sim Exp(\lambda)$ and $\sigma_\nu(t)$ be a $\nu$-stable subordinator independent of $T$. Then $\sigma_\nu(T)\sim ML_\nu(\lambda)$
\end{lem}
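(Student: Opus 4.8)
The plan is to identify the law of $\sigma_\nu(T)$ through its Laplace transform and then invoke uniqueness. First I would note that $\sigma_\nu(T)\ge 0$ almost surely, since a stable subordinator takes nonnegative values, so it suffices to show that the Laplace transform $v\mapsto \E[e^{-v\sigma_\nu(T)}]$ coincides with that of an $ML_\nu(\lambda)$ random variable, namely $\frac{\lambda}{\lambda+v^\nu}$ as recorded in \eqref{eq:MLdensLT}.

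The computation itself is a single conditioning step. Since $\sigma_\nu$ is independent of $T$ and $T\sim Exp(\lambda)$, for $v>0$ I would write
\begin{equation*}
\E[e^{-v\sigma_\nu(T)}]=\int_0^{+\infty}\E[e^{-v\sigma_\nu(t)}]\lambda e^{-\lambda t}\,dt=\int_0^{+\infty}e^{-tv^\nu}\lambda e^{-\lambda t}\,dt=\frac{\lambda}{\lambda+v^\nu},
\end{equation*}
where the middle equality uses the defining Laplace exponent $\E[e^{-v\sigma_\nu(t)}]=e^{-tv^\nu}$ of the $\nu$-stable subordinator. The interchange of expectation and integration is justified by Tonelli's theorem, the integrand being nonnegative (alternatively, by boundedness of $e^{-v\sigma_\nu(t)}$ together with dominated convergence).

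Finally I would observe that $\frac{\lambda}{\lambda+v^\nu}=\overline{f}_T(v)$ is exactly the Laplace transform of the density of a Mittag-Leffler random variable with fractional index $\nu$ and parameter $\lambda$, so by the uniqueness theorem for Laplace transforms of probability distributions on $[0,+\infty)$ we conclude $\sigma_\nu(T)\sim ML_\nu(\lambda)$. There is essentially no hard step here; the only point requiring a word of care is the Fubini/Tonelli justification for conditioning on $T$, and then quoting uniqueness of Laplace transforms rather than attempting to invert directly.
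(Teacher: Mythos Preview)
Your argument is correct and complete. It differs from the paper's proof in the characterizing functional used: you compute the Laplace transform $\E[e^{-v\sigma_\nu(T)}]$ by conditioning on $T$ and plugging in the defining Laplace exponent $\E[e^{-v\sigma_\nu(t)}]=e^{-tv^\nu}$, whereas the paper computes the survival function directly by writing $\bP(\sigma_\nu(T)>t)=\bP(T>L_\nu(t))$, conditioning on the inverse subordinator $L_\nu(t)$, and then recognizing $\int_0^{+\infty}e^{-\lambda y}\,\bP(L_\nu(t)\in dy)=E_\nu(-\lambda t^\nu)$ via a known Laplace transform identity (Bingham). Your route is arguably more self-contained, since it uses only the definition of the subordinator and the formula \eqref{eq:MLdensLT} already stated in the paper, and it avoids invoking the inverse process $L_\nu$ altogether; the paper's route, on the other hand, yields the survival function $E_\nu(-\lambda t^\nu)$ explicitly without an appeal to uniqueness of Laplace transforms. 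Both are short and valid.
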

\begin{proof}
Let us just observe that
\begin{align*}
\bP(\sigma_\nu(T)>t)&=\bP(T>L_\nu(t))\\&=\int_0^{+\infty}\bP(T>y)\bP(L_\nu(t)\in dy)\\&=\int_0^{+\infty}e^{-\lambda y}\bP(L_\nu(t)\in dy)\\&=E_\nu(-\lambda t^\nu)
\end{align*}
where the last equality is given by the Laplace transform of the density $\bP(L_\nu(t)\in dy)$ of the inverse $\nu$-stable subordinator with respect to $y$, given in \cite{Bingham1971}.
\end{proof}
\begin{lem}
Let $\sigma_\nu(t)$ be a $\nu$-stable subordinator and $T$ be a non-negative random variable independent of $\sigma_\nu$. Then
\begin{equation*}
\sigma_\nu(T)\overset{d}{=}T^{\frac{1}{\nu}}\sigma_\nu(1)
\end{equation*}
\end{lem}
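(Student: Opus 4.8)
The plan is to compare the Laplace transforms of the two sides: since both $\sigma_\nu(T)$ and $T^{1/\nu}\sigma_\nu(1)$ are non-negative random variables, equality of their Laplace transforms forces equality in law, so it suffices to check that $\E[e^{-v\sigma_\nu(T)}]=\E[e^{-vT^{1/\nu}\sigma_\nu(1)}]$ for every $v>0$.

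First I would recall the defining identity of the $\nu$-stable subordinator, $\E[e^{-v\sigma_\nu(s)}]=e^{-sv^\nu}$ for every deterministic $s\ge 0$ and $v>0$. Conditioning on $T$ and using that $T$ is independent of $\sigma_\nu$, the tower property gives
\[
\E[e^{-v\sigma_\nu(T)}]=\E\big[\,\E[e^{-v\sigma_\nu(s)}]\big|_{s=T}\,\big]=\E[e^{-Tv^\nu}],
\]
the inner expectation being evaluated for fixed $s$ and then substituted, which is legitimate by a disintegration (Fubini) argument on the product space carrying $T$ and $\sigma_\nu$.

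For the right-hand side I would argue the same way: conditioning on $T$, and using the deterministic identity $(vT^{1/\nu})^\nu=v^\nu T$ valid since $T\ge 0$,
\[
\E[e^{-vT^{1/\nu}\sigma_\nu(1)}]=\E\big[\,\E[e^{-w\sigma_\nu(1)}]\big|_{w=vT^{1/\nu}}\,\big]=\E[e^{-(vT^{1/\nu})^\nu}]=\E[e^{-v^\nu T}].
\]
The two expressions coincide for all $v>0$, giving $\sigma_\nu(T)\overset{d}{=}T^{1/\nu}\sigma_\nu(1)$. There is no real obstacle here; the only points requiring a word of care are the justification of the conditioning step via independence and Fubini, and the remark that the Laplace transform determines the distribution of a non-negative random variable. (As a sanity check, taking $T$ deterministic recovers the familiar self-similarity $\sigma_\nu(s)\overset{d}{=}s^{1/\nu}\sigma_\nu(1)$.)
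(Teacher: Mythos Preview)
Your argument is correct. The paper takes a closely related but slightly different route: it conditions on $T$ at the level of survival functions,
\[
\bP(\sigma_\nu(T)>t)=\int_0^{+\infty}\bP(\sigma_\nu(y)>t)\,\bP(T\in dy),
\]
and then invokes the self-similarity $\sigma_\nu(y)\overset{d}{=}y^{1/\nu}\sigma_\nu(1)$ as a known fact to rewrite the integrand as $\bP(y^{1/\nu}\sigma_\nu(1)>t)$ before reassembling. You instead work directly with Laplace transforms and use only the defining identity $\E[e^{-v\sigma_\nu(s)}]=e^{-sv^\nu}$, so that self-similarity falls out as a consequence rather than being assumed. Both proofs hinge on the same conditioning/independence step; yours is a touch more self-contained, while the paper's makes the role of self-similarity explicit.
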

\begin{proof}
Let us just observe that
\begin{align*}
\bP(\sigma_\nu(T)>t)&=\int_{0}^{+\infty}\bP(\sigma_\nu(y)>t)\bP(T \in dy)\\&=\int_0^{+\infty}\bP(y^{\frac{1}{\nu}}\sigma_\nu(1)>t)\bP(T \in dy)\\&=\bP(T^{\frac{1}{\nu}}\sigma_\nu(1))
\end{align*}
where the second equality is given by $\sigma_\nu(t)\overset{d}{=}t^{\frac{1}{\nu}}\sigma_\nu(1)$.
\end{proof}
By using these two lemmas it is easy to show that
\begin{cor}
Let $T \sim ML_\nu(\lambda)$, $\sigma_\nu(t)$ be a $\nu$-stable subordinator and $S \sim Exp(\lambda)$ independent from $\sigma_\nu(t)$. Then $T \overset{d}{=}S^{\frac{1}{\nu}}\sigma_\nu(1)$.
\end{cor}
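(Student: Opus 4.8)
The plan is simply to compose the two preceding lemmas. Since $S \sim Exp(\lambda)$ and $\sigma_\nu$ is independent of $S$, the first lemma applies verbatim and gives $\sigma_\nu(S) \sim ML_\nu(\lambda)$; hence $\sigma_\nu(S) \overset{d}{=} T$. Next, $S$ is in particular a non-negative random variable independent of $\sigma_\nu$, so the second lemma applies with this $S$ playing the role of the generic non-negative random variable there, yielding $\sigma_\nu(S) \overset{d}{=} S^{1/\nu}\sigma_\nu(1)$. Chaining the two distributional identities gives $T \overset{d}{=} \sigma_\nu(S) \overset{d}{=} S^{1/\nu}\sigma_\nu(1)$, which is exactly the assertion.

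The only point that deserves a word of care is that the hypotheses of the two lemmas are mutually compatible: the first needs the time argument to be exponentially distributed and independent of $\sigma_\nu$, while the second only needs it to be non-negative and independent of $\sigma_\nu$; both are satisfied by the same pair $(S,\sigma_\nu)$, so no additional randomness has to be introduced and the two distributional equalities can be composed directly. Consequently there is no real obstacle here — the statement is a formal corollary of the two lemmas and a one-line argument suffices. It is worth noting, as the motivation for isolating this corollary, that it reduces the simulation of a Mittag-Leffler random variable $T$ to the simulation of an exponential random variable $S$ and an independent value $\sigma_\nu(1)$ of the $\nu$-stable subordinator at time $1$, both of which admit standard sampling procedures.
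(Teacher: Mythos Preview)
Your proof is correct and matches the paper's intended argument exactly: the paper itself does not spell out a proof but simply states that the corollary follows ``by using these two lemmas,'' which is precisely the chaining you perform. Your additional remarks on compatibility of hypotheses and on the simulation motivation are accurate and in the spirit of the surrounding section.
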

To simulate the behaviour of the queue process $Q^\nu(t)$, we will use a modified version of the Gillespie algorithm (see \cite{Gillespie1977}). This modified version, already introduced in \cite{CahoyPolitoPhoha2015}, is based on the fact that if $T_n$ are the jump times of $Q^\nu(t)$, then $Q^\nu(T_n)$ is still a Markov chain with the same transition probabilities as $Q^1(T_n)$. Thus we need first to simulate the jump times $T_n$ and then to define what event will happen in such time. In particular we will follow these steps
\begin{enumerate}
\item[Step 1] Fix $Q^\nu(0)$ and $T_0=0$;
\item[Step 2] Suppose have simulated the queue up to the $n$-th event that happens in time $T_n$. If $Q(T_n)=(0,0)$ simulate a random variable $I \sim ML_\nu(\lambda)$, if $Q(T_n) \in \cS^*$ simulate a random variable $I \sim ML_\nu(\lambda+k\mu)$;
\item[Step 3] Obtain the random variable $T_{n+1}=T_n+I$;
\item[Step 4] If $Q^\nu(T_{n})=(0,0)$, pose $Q^\nu(T_{n+1})=(1,k)$. If $Q^\nu(T_{n}) \in \cS^*$, then simulate a uniform variable $U$ in $(0,1)$. If $U<\frac{\lambda}{\lambda+k\mu}$, then pose $N(T_{n+1})=N(T_n)+1$ and $S(T_{n+1})=S(T_n)$. If $U\ge \frac{\lambda}{\lambda+k\mu}$, distinguish three cases:
\begin{itemize} 
\item if $S(T_n)>1$, then pose $N(T_{n+1})=N(T_n)$ and $S(T_{n+1})=S(T_n)-1$;
\item if $S(T_n)=1$ and $N(T_n)>1$, then pose $S(T_{n+1})=k$ and $N(T_{n+1})=N(T_n)-1$;
\item if $S(T_n)=1$ and $N(T_n))=1$, then pose $S(T_{n+1})=N(T_{n+1})=0$;
\end{itemize}
\item[Step 5] To obtain the whole process $Q^\nu(t)$, one has just to pose $Q^\nu(t)=Q^\nu(T_{n-1})$ for any $t \in [T_{n-1},T_n)$.
\end{enumerate}
In Figure \ref{fig:sperlang} we provide the simulation of a sample path of the process $N^\nu(t)$ for a fractional Erlang queue and we show its embedded Markov chain.
\begin{figure}[h]
\centering
\includegraphics[width=0.49\linewidth]{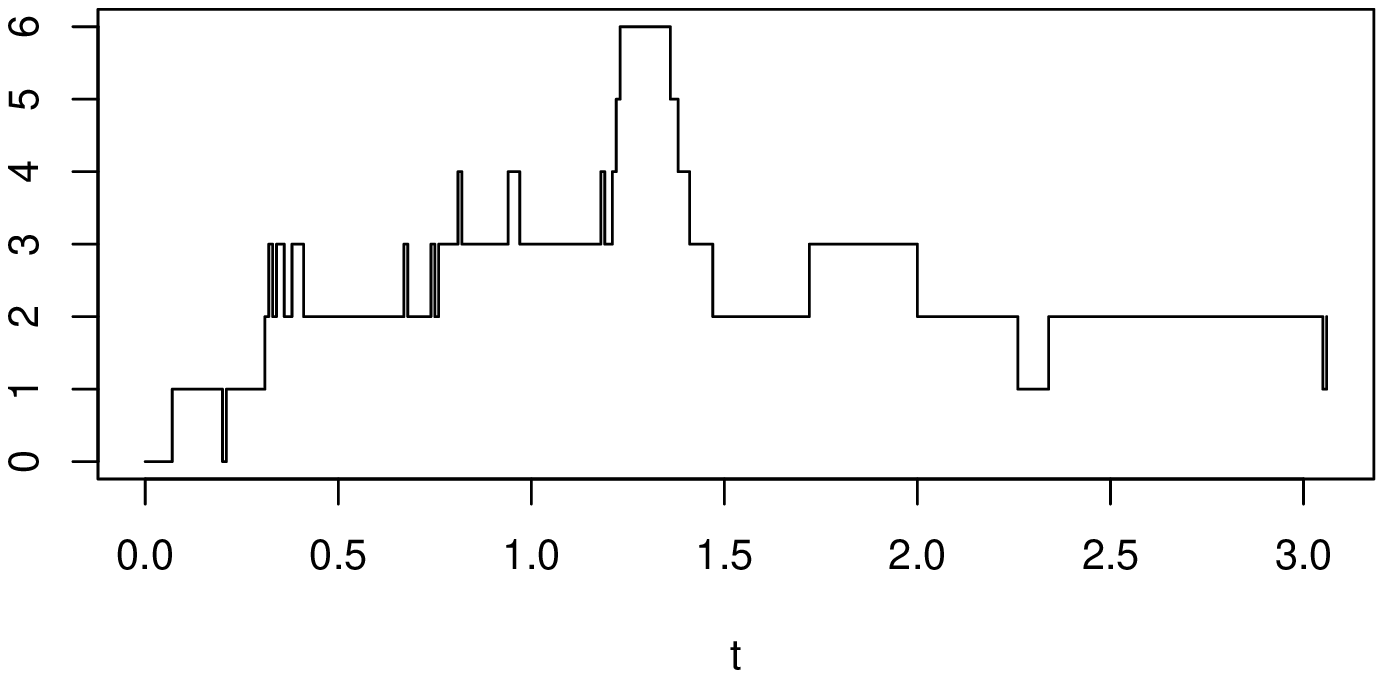}
\includegraphics[width=0.49\linewidth]{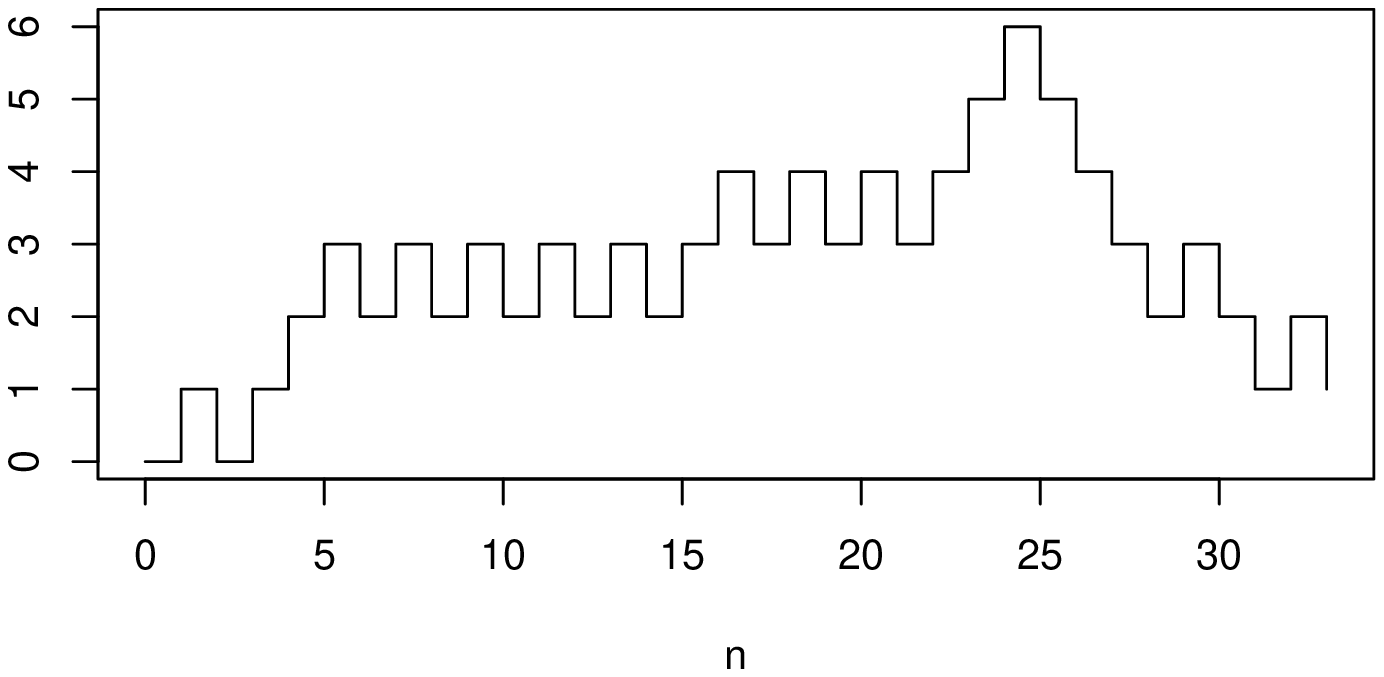}
\caption{Sample paths simulation. On the left, a sample path of $N^\nu(t)$. On the right, its embedded Markov chain $N^\nu(J_n)$ where $J_n$ are the jump times of $N^\nu(t)$. We set $k=2$, $\lambda=4$, $\mu=5$ and $\nu=0.75$.}
\label{fig:sperlang}
\end{figure}
Another way one can use to simulate such process, is to first simulate a classical Erlang queue $Q(t)$, then to simulate an inverse $\nu$-stable subordinator $L_\nu(t)$ (some numerical approach to inverse subordinators are described in \cite{VeilletteTaqquMurad2010a,VeilletteTaqquMurad2010b}, while some simulation algorithms for the stable subordinator are given in \cite{AsmussenGlynn2007}) and finally to combine the two simulated process in $Q^\nu(t):=Q(L_\nu(t))$ (see, for instance, \cite[Example $5.21$]{MeerschaertSikorskii2011}). In this way, the fractional Erlang queue is obtained as a dilatation in time of the classical one, thus we can explicitly see the effect of the inverse $\nu$-stable subordinator, as it is done in Figure \ref{fig:erlangnontc}. 
\begin{figure}[h]
\centering
\includegraphics[width=0.49\linewidth]{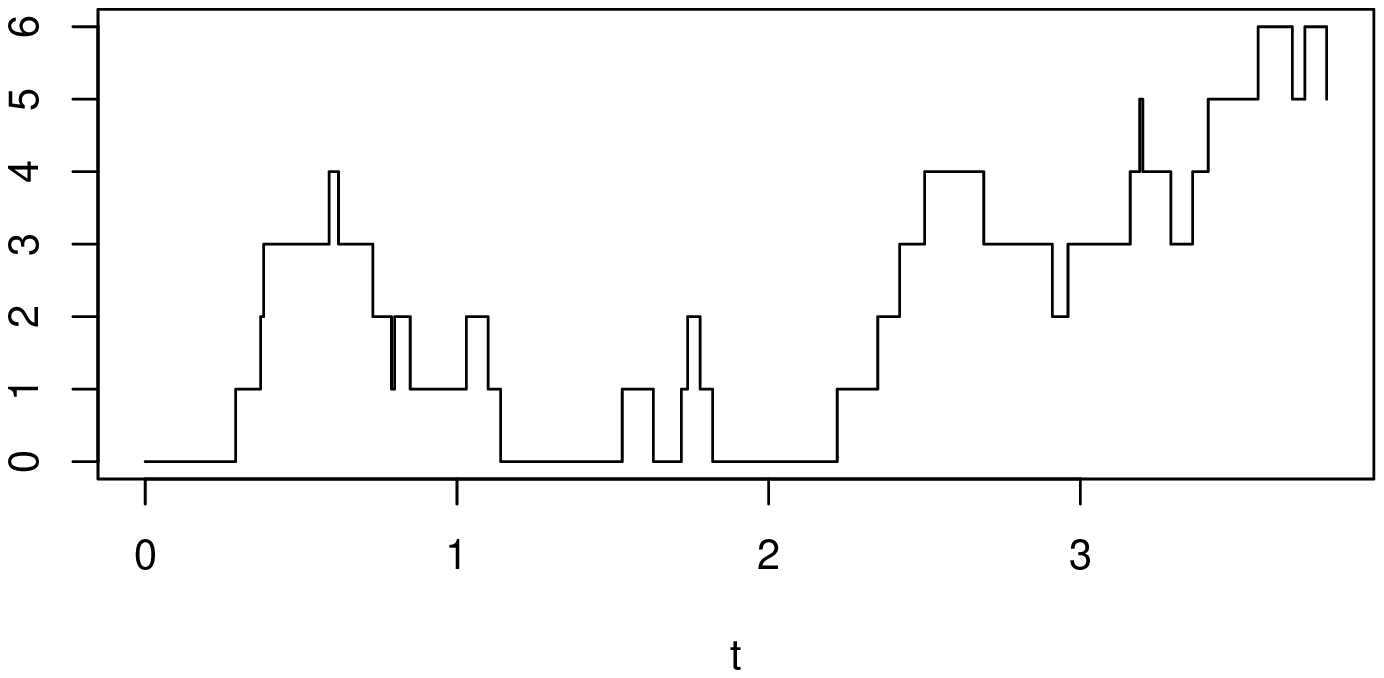}
\includegraphics[width=0.49\linewidth]{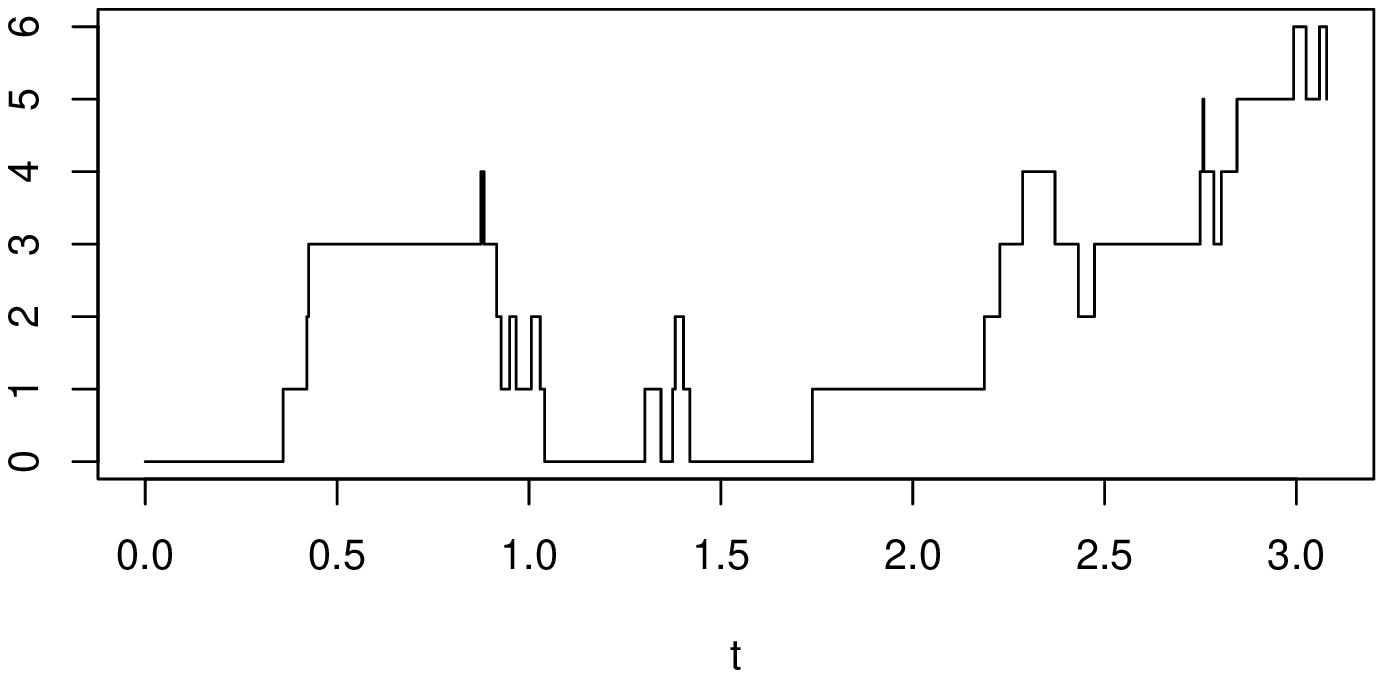}
\caption{Sample paths simulation. On the left, a sample path of $N(t)$ for a classical $M/E_k/1$ queue. On the right, a sample path of $N(L_\nu(t))$. We set $k=2$, $\lambda=4$, $\mu=5$ and $\nu=0.75$.}
\label{fig:erlangnontc}
\end{figure}
\section*{Acknowledgments}
N. Leonenko was supported in particular by Australian Research Council's Discovery Projects funding scheme (project DP160101366), and by project MTM2015-71839-P of MINECO, Spain (co-funded with FEDER funds).\\
E. Pirozzi was supported by INDAM-GNCS and MANM.
\bibliographystyle{plain}
\bibliography{biblio}

\begin{thebibliography}{10}

\bibitem{AlettiLeonenkoMarzbach2018}
Giacomo Aletti, Nikolai Leonenko, and Ely Merzbach.
\newblock Fractional {P}oisson fields and martingales.
\newblock {\em J. Stat. Phys.}, 170(4):700--730, 2018.

\bibitem{AscioneLeonenkoPirozzi2018}
Giacomo Ascione, Nikolai Leonenko, and Enrica Pirozzi.
\newblock Fractional queues with catastrophes and their transient behaviour.
\newblock {\em Math.}, 6(9):159, 2018.

\bibitem{AsmussenGlynn2007}
S{\o}ren Asmussen and Peter~W Glynn.
\newblock {\em Stochastic Simulation: Algorithms and Analysis}, volume~57.
\newblock Springer Science \& Business Media, 2007.

\bibitem{BeghinOrsingher2009}
Luisa Beghin, Enzo Orsingher, et~al.
\newblock Fractional {P}oisson processes and related planar random motions.
\newblock {\em Electron. J. Probab.}, 14(61):1790--1826, 2009.

\bibitem{BeghinOrsingher2010}
Luisa Beghin, Enzo Orsingher, et~al.
\newblock Poisson-type processes governed by fractional and higher-order
  recursive differential equations.
\newblock {\em Electron. J. Probab.}, 15:684--709, 2010.

\bibitem{Bingham1971}
NH~Bingham.
\newblock Limit theorems for occupation times of {M}arkov processes.
\newblock {\em Z. Wahrscheinlichkeitstheorie Verw. Gebiete}, 17(1):1--22, 1971.

\bibitem{CahoyPolitoPhoha2015}
Dexter~O Cahoy, Federico Polito, and Vir Phoha.
\newblock Transient behavior of fractional queues and related processes.
\newblock {\em Methodol. Comput. Appl. Probab.}, 17(3):739--759, 2015.

\bibitem{DiCrescenzo2018}
Antonio Di~Crescenzo, Virginia Giorno, Balasubramanian Krishna~Kumar, and
  Amelia~G Nobile.
\newblock A time-non-homogeneous double-ended queue with failures and repairs
  and its continuous approximation.
\newblock {\em Math.}, 6(5):81, 2018.

\bibitem{DiCrescenzo2018b}
Antonio Di~Crescenzo, Virginia Giorno, Balasubramanian~Krishna Kumar, and
  Amelia~G Nobile.
\newblock M/{M}/1 queue in two alternating environments and its heavy traffic
  approximation.
\newblock {\em J. Math. Anal. Appl.}, 465(2):973--1001, 2018.

\bibitem{DiCrescenzo2003}
Antonio Di~Crescenzo, Virginia Giorno, Amelia~G Nobile, and Luigi~M Ricciardi.
\newblock On the {M}/{M}/1 queue with catastrophes and its continuous
  approximation.
\newblock {\em Queueing Syst.}, 43(4):329--347, 2003.

\bibitem{Gaver1954}
Donald~P Gaver.
\newblock The influence of servicing times in queuing processes.
\newblock {\em J. Oper. Res. Soc. Am.}, 2(2):139--149, 1954.

\bibitem{GihmanSkorohod1975}
II~Gihman and AV~Skorohod.
\newblock The theory of stochastic processes. ii.
\newblock {\em Grundlehren Math. Wiss.}, 218, 1975.

\bibitem{Gillespie1977}
Daniel~T Gillespie.
\newblock Exact stochastic simulation of coupled chemical reactions.
\newblock {\em J. Phys. Chem.}, 81(25):2340--2361, 1977.

\bibitem{GiornoNobilePirozzi2018}
V~Giorno, AG~Nobile, and E~Pirozzi.
\newblock A state-dependent queueing system with asymptotic logarithmic
  distribution.
\newblock {\em J. Math. Anal. Appl.}, 458(2):949--966, 2018.

\bibitem{GrifLeoWill2005b}
J~Griffiths, G~Leonenko, and J~Williams.
\newblock Generalization of the modified {B}essel function and its generating
  function.
\newblock {\em Fract. Calc. Appl. Anal.}, 8(3):267--276, 2005.

\bibitem{GrifLeoWill2005}
Jeff~D Griffiths, Ganna~M Leonenko, and Janet~E Williams.
\newblock The transient solution to {M}/{E}k/1 queue.
\newblock {\em Oper. Res. Lett.}, 34(3):349--354, 2006.

\bibitem{HalmosSunder2012}
Paul~Richard Halmos and Viakalathur~Shankar Sunder.
\newblock {\em Bounded integral operators on {L} 2 spaces}, volume~96.
\newblock Springer Science \& Business Media, 2012.

\bibitem{HauboldMathaiSaxena2011}
Hans~J Haubold, Arak~M Mathai, and Ram~K Saxena.
\newblock Mittag-{L}effler functions and their applications.
\newblock {\em J. Appl. Math.}, 2011, 2011.

\bibitem{KexuePeng2011}
Li~Kexue and Peng Jigen.
\newblock Laplace transform and fractional differential equations.
\newblock {\em Appl. Math. Lett.}, 24(12):2019 -- 2023, 2011.

\bibitem{KilbasSrivastavaTrujillo2006}
A~Anatolii~Aleksandrovich Kilbas, Hari~Mohan Srivastava, and Juan~J Trujillo.
\newblock {\em Theory and applications of fractional differential equations},
  volume 204.
\newblock Elsevier Science Limited, 2006.

\bibitem{Lang2013}
Serge Lang.
\newblock {\em Complex analysis}, volume 103.
\newblock Springer Science \& Business Media, 2013.

\bibitem{Laskin2003}
Nick Laskin.
\newblock Fractional {P}oisson process.
\newblock {\em Commun. Nonlinear Sci. Numer. Simul.}, 8(3-4):201--213, 2003.

\bibitem{LiQianChen2011}
Changpin Li, Deliang Qian, and YangQuan Chen.
\newblock On {R}iemann-{L}iouville and {C}aputo derivatives.
\newblock {\em Discret. Dyn. Nat. Soc.}, 2011, 2011.

\bibitem{Luchak1956}
George Luchak.
\newblock The solution of the single-channel queuing equations characterized by
  a time-dependent {P}oisson-distributed arrival rate and a general class of
  holding times.
\newblock {\em Oper. Res.}, 4(6):711--732, 1956.

\bibitem{Luchak1958}
George Luchak.
\newblock The continuous time solution of the equations of the single channel
  queue with a general class of service-time distributions by the method of
  generating functions.
\newblock {\em J. Royal Stat. Soc. Ser. B (Methodol.)}, pages 176--181, 1958.

\bibitem{MainardiGorenfloScalas2007}
F~Mainardi, R~Gorenflo, and E~Scalas.
\newblock A fractional generalization of the {P}oisson processes.
\newblock {\em Vietnam J. Math.}, 32(math. PR/0701454):53--64, 2007.

\bibitem{MainardiGorenfloVivoli2005}
F~Mainardi, R~Gorenflo, and A~Vivoli.
\newblock Renewal processes of {M}ittag-{L}effler and wright type.
\newblock {\em Fract. Calc. Appl. Anal.}, 8(math.PR/0701455. 1):7. 31 p, Jan
  2007.

\bibitem{MeerschaertNaneVellaisamy2011}
Mark Meerschaert, Erkan Nane, P~Vellaisamy, et~al.
\newblock The fractional {P}oisson process and the inverse stable subordinator.
\newblock {\em Electron. J. Probab.}, 16:1600--1620, 2011.

\bibitem{MeerschaertSikorskii2011}
Mark~M Meerschaert and Alla Sikorskii.
\newblock {\em Stochastic models for fractional calculus}, volume~43.
\newblock Walter de Gruyter, 2011.

\bibitem{MeerschaertStraka2013}
Mark~M Meerschaert and Peter Straka.
\newblock Inverse stable subordinators.
\newblock {\em Math. Model. Nat. Phenom.}, 8(2):1--16, 2013.

\bibitem{Olver1997}
Frank Olver.
\newblock {\em Asymptotics and special functions}.
\newblock AK Peters/CRC Press, 1997.

\bibitem{KexuePeng2010}
Jigen Peng and Kexue Li.
\newblock A note on property of the {M}ittag-{L}effler function.
\newblock {\em J. Math. Anal. Appl.}, 370(2):635--638, 2010.

\bibitem{Podlubny1998}
Igor Podlubny.
\newblock {\em Fractional differential equations: an introduction to fractional
  derivatives, fractional differential equations, to methods of their solution
  and some of their applications}, volume 198.
\newblock Elsevier, 1998.

\bibitem{Saaty1961}
Thomas~L Saaty.
\newblock {\em Elements of queueing theory: with applications}, volume 34203.
\newblock McGraw-Hill New York, 1961.

\bibitem{VeilletteTaqquMurad2010a}
Mark Veillette and Murad~S Taqqu.
\newblock Numerical computation of first-passage times of increasing {L}{\'e}vy
  processes.
\newblock {\em Methodol. Comput. Appl. Probab.}, 12(4):695--729, 2010.

\bibitem{VeilletteTaqquMurad2010b}
Mark Veillette and Murad~S Taqqu.
\newblock Using differential equations to obtain joint moments of first-passage
  times of increasing {L}{\'e}vy processes.
\newblock {\em Stat. Probab. Lett.}, 80(7-8):697--705, 2010.

\end{thebibliography}

\end{document}